\theoremstyle{plain}
\newtheorem{thm}{Theorem}[section]
\newtheorem{lem}[thm]{Lemma}
\newtheorem{cor}[thm]{Corollary}
\newtheorem{prop}[thm]{Proposition}
\theoremstyle{definition}
\newtheorem{defn}[thm]{Definition}
\newtheorem{conj}[thm]{Conjecture}
\newtheorem{quest}[thm]{Question}
\newcommand{\thisconjname}{}
\newtheorem*{genericconj*}{\thisconjname}
\newenvironment{namedconj*}[1]
{\renewcommand{\thisconjname}{#1}%
	\begin{genericconj*}}
	{\end{genericconj*}}
\newcommand{\thislemname}{}
\newtheorem*{genericlem*}{\thislemname}
\newenvironment{namedlem*}[1]
{\renewcommand{\thislemname}{#1}%
	\begin{genericlem*}}
	{\end{genericlem*}}
\theoremstyle{remark}
\newtheorem{rem}{Remark}
\newtheorem{claim}{Claim}
\newtheorem{hyp}{Hypothesis}
\newcommand{\N}{\mathbb{N}}
\newcommand{\R}{\mathbb{R}}
\newcommand{\sx}{\sigma_x}
\newcommand{\bx}{\beta_x}
\DeclareMathOperator{\Aut}{Aut}
\DeclareMathOperator{\dom}{dom}
\DeclareMathOperator{\id}{id}
\DeclareMathOperator{\DL}{DL}
\DeclareMathOperator{\Aff}{Aff}
\DeclareMathOperator{\QI}{QI}
\DeclareMathOperator{\h}{\mathfrak{h}}
\DeclareMathOperator{\gm}{gm}
\DeclareMathOperator{\im}{im}
\definecolor{darkgreen}{cmyk}{1,0,1,.2}
\definecolor{m}{rgb}{1,0.1,1}
\definecolor{darkorchid}{rgb}{0.6, 0.2, 0.8}
\newdimen\theight
\def\TeXref#1{%
	\leavevmode\vadjust{\setbox0=\hbox{{\tt
				\quad\quad  {\small \textrm #1}}}%
		\theight=\ht0
		\advance\theight by \lineskip
		\kern -\theight \vbox to
		\theight{\rightline{\rlap{\box0}}%
			\vss}%
}}%
\begin{document}

\title[Coarse distinguishability of graphs]{Coarse distinguishability of graphs with symmetric growth}

\author[J.A. \'{A}lvarez L\'{o}pez]{Jes\'{u}s A. \'{A}lvarez L\'{o}pez}
\address{Jes\'{u}s A. \'{A}lvarez L\'{o}pez, Departamento e Instituto de Matem\'{a}ticas, Facultade de Matem\'{a}ticas, Universidade de Santiago de Compostela, 15782 Santiago de Compostela, Spain. \textit{E-mail}: \textup{\texttt{jesus.alvarez@usc.es}}}

\author[R. Barral Lij\'o]{Ram\'on Barral Lij\'o}
\address[Corresponding author]{Ram\'on Barral Lij\'o, Research Organization of Science and Technology,	Ritsumeikan University, Nojihigashi 1-1-1, Kusatsu-Shiga, 525-8577, Japan. \textit{E-mail}: \textup{\texttt{ramonbarrallijo@gmail.com}}}

\author[H. Nozawa]{Hiraku Nozawa}
\address{Hiraku Nozawa, Department of Mathematical Sciences, Colleges of Science and Engineering, Ritsumeikan University,	Kusatsu-Shiga, 525-8577, Japan. \textit{E-mail}: \textup{\texttt{hnozawa@fc.ritsumei.ac.jp}}}

\subjclass[2010]{05C15, 51F99}

\keywords{graph, coloring, distinguishing, coarse, growth, symmetry}

\date{\today}

\begin{abstract}
Let $X$ be a connected, locally finite graph with symmetric growth. We prove that there is a vertex coloring $\phi\colon X\to\{0,1\}$ and some  $R\in\N$ such that every automorphism $f$ preserving $\phi$ is $R$-close to the identity map; this can be seen as a coarse geometric version of symmetry breaking. We also prove that the infinite motion conjecture  is true for graphs where at least one vertex stabilizer $S_x$ satisfies the following condition: for every non-identity automorphism $f\in S_x$, there is a sequence $x_n$ such that $\lim d(x_n,f(x_n))=\infty$.
\end{abstract}

\maketitle

\tableofcontents

\section{Introduction}

A (not necessarily proper) vertex coloring $\phi$ of a graph is \emph{distinguishing} if the only automorphism that preserves $\phi$ is the identity. This notion was first introduced in~\cite{Babai} under the name \emph{asymmetric coloring}, where it was proved that $2$ colors suffice to produce a distinguishing coloring of a regular tree. Later, Albertson and Collins~\cite{AlbertsonCollins} defined the  \emph{distinguishing number} $D(X)$ of a graph $X$ as the least number of colors needed to produce a distinguishing coloring. The problem of calculating  $D(X)$ and variants thereof has accumulated an extensive literature in the last 20 years, see e.g.~\cite{HuningImrichKloasSchreberTucker, Tucker, LehnerMoller, Lehner, LehnerPilsniakStawiski, AlvarezBarral} and references therein.

One of most important open problems in graph distinguishability is the Infinite Motion Conjecture of T.~Tucker. Let us introduce some preliminaries: The \emph{motion} $m(f)$ of a graph automorphism $f$ is the cardinality of the set of points that are not fixed by $f$. 
For a graph $X$ and a subset $A\subset \Aut(X)$, the motion of $A$ is $m(A)=\inf\{m(f)\mid f\in A,\ f\neq \id\}$, and the motion of $X$ is $m(X)=m(\Aut(X))$. 
A probabilistic argument yields the following result for finite graphs.

\begin{lem}[Motion Lemma,~\cite{RussellSundaram}] \label{l:motionlemma}
	If $X$ is a finite graph and $2^{m(X)}\geq |\Aut(X)|^2$, then $D(X)\leq 2$.
\end{lem}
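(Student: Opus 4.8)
The plan is to run the standard first-moment probabilistic argument of Russell and Sundaram: choose a coloring $\phi\colon X\to\{0,1\}$ uniformly at random (each vertex colored $0$ or $1$ independently with probability $1/2$), and show that with positive probability $\phi$ is distinguishing.

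First I would fix a non-identity automorphism $f\in\Aut(X)$ and bound the probability $p_f$ that $\phi$ is $f$-invariant, i.e.\ $\phi\circ f=\phi$. Observe that $\phi\circ f=\phi$ holds precisely when $\phi$ is constant on every orbit of the cyclic group $\langle f\rangle$ acting on $X$; since the colors are independent and uniform, this gives $p_f=2^{k_f}/2^{n}=2^{k_f-n}$, where $n=|X|$ and $k_f$ is the number of $\langle f\rangle$-orbits. The key elementary estimate is $n-k_f\geq m(f)/2$: an orbit of size $\ell$ contributes $\ell-1$ to $n-k_f$, and $\ell-1\geq\ell/2$ for $\ell\geq2$, so summing over the orbits of size $\geq2$ — whose sizes add up to exactly $m(f)$ — yields the bound. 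Hence $p_f\leq 2^{-m(f)/2}\leq 2^{-m(X)/2}$.

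Then I would apply the union bound over the finitely many non-identity automorphisms: the probability that some $f\neq\id$ preserves $\phi$ is at most $(|\Aut(X)|-1)\,2^{-m(X)/2}$. By hypothesis $2^{m(X)}\geq|\Aut(X)|^2$, so $2^{-m(X)/2}\leq|\Aut(X)|^{-1}$ and this probability is at most $(|\Aut(X)|-1)/|\Aut(X)|<1$. Consequently some coloring $\phi$ is preserved by no non-identity automorphism, i.e.\ it is distinguishing, and therefore $D(X)\leq2$.

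The argument is short and I do not expect a real obstacle; the only delicate points are the combinatorial identity $p_f=2^{k_f-n}$ (which relies on translating $f$-invariance into constancy on $\langle f\rangle$-orbits and on the independence of the random colors) and the inequality $n-k_f\geq m(f)/2$, together with the bookkeeping needed to keep the union bound strictly below $1$ — this is why one writes $|\Aut(X)|-1$ rather than $|\Aut(X)|$.
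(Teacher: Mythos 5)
The paper itself does not prove this lemma; it simply cites Russell--Sundaram. Your argument correctly reproduces their standard first-moment proof (random $2$-coloring, the bound $\Pr[\phi\circ f=\phi]=2^{k_f-n}\le 2^{-m(f)/2}$ via the cycle count, then a union bound over $\Aut(X)\setminus\{\id\}$), so there is nothing to add.
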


We always have $|\Aut(X)|^2\leq 2^{\aleph_0}$ when $X$ is countable, which motivates the following generalization.

\begin{conj}[Infinite motion conjecture,~\cite{Tucker}]\label{c:infinitemotionconjecture}
	If $X$ is a connected, locally finite graph with infinite motion, then $D(X)\leq 2$.
\end{conj}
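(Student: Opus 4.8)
The plan is to upgrade the probabilistic argument behind the Motion Lemma (Lemma~\ref{l:motionlemma}) to the infinite setting. Endow the space $\{0,1\}^X$ of $2$-colorings with the uniform product measure; for a fixed $f\neq\id$ the event ``$\phi\circ f=\phi$'' forces $\phi$ to be constant on every orbit of $\langle f\rangle$, so it is null as soon as $m(f)=\infty$. What fails relative to the finite case is that $\Aut(X)$ may be uncountable, so the bad set $\mathcal B=\{\phi:\exists\,f\neq\id,\ \phi\circ f=\phi\}$ is an uncountable union of null sets and need not itself be null. The strategy is to cover $\mathcal B$ by \emph{countably many} null sets by a case analysis on the structure of $X$ and of its automorphisms, and then to choose $\phi$ outside that union.

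First I would dispose of the more tractable cases. If $X$ has more than one end, then $\Aut(X)$ acts on the structure tree of its ends, and a distinguishing $2$-coloring can be induced from a distinguishing coloring of that tree, so one may assume $X$ is one-ended. If $X$ has subexponential growth, then for each vertex $x$ the automorphism group of the ball $B(x,r)$ has size subexponential in $r$ while the ball itself is infinite, so applying Lemma~\ref{l:motionlemma} ball by ball and diagonalizing over $r$ produces, for each fixed $x$, a coloring breaking every automorphism fixing $x$; enumerating the vertices $x_1,x_2,\dots$ and keeping the commitments at each stage finite then kills every elliptic automorphism (one with a finite orbit, hence a fixed vertex), while one-endedness together with imposing aperiodicity along a single orbit of each remaining automorphism kills the rest. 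The interesting regime is thus a one-ended graph of exponential growth.

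The hard part will be exactly this last regime, where $\Aut(X)$ can have the cardinality of the continuum: the ball-count-versus-automorphism-count inequality that drives the Motion Lemma fails, and a non-identity automorphism $f$ — whether or not it fixes a vertex — can have an infinite set of moved vertices that is ``concentrated'' enough that no finite commitment of a greedy construction ever forces $\phi\circ f\neq\phi$, and I know of no mechanism covering the resulting bad colorings by countably many null sets. This is precisely the content that keeps the conjecture open, and the role of this paper is to carry the argument through under one extra hypothesis tailored to neutralize exactly this obstruction: if some vertex stabilizer $S_x$ has the property that every non-identity $f\in S_x$ admits vertices $x_n$ with $\lim_n d(x_n,f(x_n))=\infty$, then no nontrivial $\phi$-preserving symmetry can be ``coarsely bounded,'' so a coloring that breaks symmetry up to bounded error is automatically distinguishing on that stabilizer, and a diagonal argument over the vertices upgrades this to $D(X)\leq 2$. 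Removing the hypothesis — equivalently, handling an automorphism that moves infinitely many vertices but all of them only a bounded amount, or one with tightly concentrated infinite support — is the genuine obstacle to the conjecture in full.
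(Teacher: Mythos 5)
You were asked about a statement that is a \emph{conjecture}: the paper itself does not prove it, and neither do you. Your proposal is candid that the one-ended exponential-growth regime is out of reach, so as a proof of the statement it has a gap by construction; but the cases you claim to dispose of are also not actually disposed of. For the multi-ended case, the passage from a distinguishing coloring of the structure tree of ends to a distinguishing $2$-coloring of $X$ is unjustified: the action of $\Aut(X)$ on that tree has a kernel (automorphisms fixing every end), a vertex coloring of the tree is not a vertex $2$-coloring of $X$, and nothing is said about breaking the kernel. For the subexponential case, your key premise --- that $|\Aut(B(x,r))|$ is subexponential in $r$ when $X$ has subexponential growth --- is false: a caterpillar-type graph of linear growth whose spine vertices each carry two pendant leaves has balls whose automorphism groups have size exponential in $r$ and motion $2$, so the ball-by-ball Motion Lemma inequality $2^{m}\geq|\Aut|^2$ fails badly. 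This is exactly why Lehner's theorem, which you are implicitly invoking, only reaches growth $\mathcal{O}(2^{(1-\epsilon)\sqrt{n}/2})$ and uses a much more delicate probabilistic argument than ``apply Lemma~\ref{l:motionlemma} to balls and diagonalize''; general subexponential growth is still open.

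Your description of what the paper actually contributes is also off in two respects. First, Theorem~\ref{t:motion} requires \emph{both} the geometric-motion hypothesis $\gm(S_x)=\infty$ for some $x$ \emph{and} symmetric growth (Definition~\ref{d:uniform}); the latter is essential and absent from your account. Second, the paper's mechanism is not a covering of bad colorings by countably many null sets: it is a deterministic coarse-geometric construction. One fixes a $(2R+1)$-separated, $2R$-coarsely dense net $Y$, colors annuli around $Y$ by the pattern $\psi$ of~\eqref{defnpsi} so that any $\phi$-preserving automorphism must move $Y$ to within distance $1$ of itself (Lemmas~\ref{l:phiyprime} and~\ref{l:auto}), and then uses the growth estimates of Section~\ref{s:growth} (Proposition~\ref{p:main}) to show that the free spheres $S(y,r)$, $r\in B$, carry enough ``effective colors'' to distinguish the coarse graph $(Y,E_Y)$; infinite motion plus $\gm(S_x)=\infty$ then force a contradiction with the resulting uniform bound $d(x,f(x))\leq 4R$. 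If you want to pursue your measure-theoretic framing, the honest conclusion is that it reproves nothing beyond the finite Motion Lemma without substantial new input, and the concrete obstruction you name at the end (bounded displacement on an infinite moved set) is precisely what the paper's geometric-motion hypothesis is designed to exclude, not to overcome.
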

The condition of local finiteness cannot be omitted~\cite{LehnerMoller}; note also that every connected, locally finite graph is countable.  This conjecture has been confirmed for special classes of graphs: F.~Lehner proved it in~\cite{Lehner} for graphs with growth at most $\mathcal{O}(2^{(1-\epsilon)\frac{\sqrt{n}}{2}})$ for some $\epsilon>0$,\footnote{The notation $f=\mathcal{O}(g)$ is used if there are $C,N$ such that $f(x)\leq Cg(x)$ for all $x>N$.} and later, together with M.~Pil\'sniak and M.~Stawiski~\cite{LehnerPilsniakStawiski}, for graphs with degree less or equal to five.

The aim of this paper is to introduce a large-scale-geometric version of  distinguishability for colorings, and to prove the existence of such colorings in graphs whose growth functions are large-scale symmetric. This will result in a proof of Conjecture~\ref{c:infinitemotionconjecture} for graphs with a vertex stabilizer $S_x$ satisfying that, for every  automorphism $f\in S_{x}\setminus \{\id\}$, there is a sequence $x_n$ such that $d(x_n,f(x_n))\to\infty$; we can regard this condition as a geometric refinement of having infinite motion.

Let $X$ and $Y$ be connected graphs, endowed with their canonical $\N$-valued\footnote{We will use the convention that $0\in\N$.} metric. In the context of coarse geometry (see~\cite{Roe} for a nice exposition on the subject), two functions $f,g\colon X\to Y$ are \emph{$R$-close} ($R\geq0$) if $d(f(x),g(x))\leq R$ for all $x\in X$, and we say that $f$ and $g$ are \emph{close} if they are $R$-close for some $R\geq0$. Let $\QI(X)$ denote the group of closeness classes of quasi-isometries (in the sense of Gromov) $f\colon X\to X$, and let $\iota\colon\Aut(X)\to\QI(X)$ denote the natural map that sends every automorphism to its closeness class. 
We can adapt the notion of distinguishing coloring to this setting as follows:

\begin{defn}
A coloring $\phi\colon X\to \N$ is \emph{coarsely distinguishing} if every $f\in\Aut(X,\phi)$ is close to the identity; that is, $\iota(\Aut(X,\phi))=\{[\id_X]\}$.
\end{defn}

This new definition begs the following question: which connected, locally finite graphs admit a coarsely distinguishing coloring by two colors? In Section~\ref{ss:counter} we present a simple example of a graph that does not admit such a coloring. The first main result of this paper shows that  graphs with \emph{symmetric growth} (see Definition~\ref{d:uniform}) admit  coarsely distinguishing colorings by two colors; this condition is satisfied by vertex-transitive graphs and, more generally, coarsely quasi-symmetric graphs~\cite[Def.~3.16 \& Cor.~4.17]{AlvarezCandel}.  

\begin{thm}\label{t:main}
	Let $X$ be a connected, locally finite graph of symmetric growth. Then there are $R\in\N$ and $\phi\colon X\to \{0,1\}$ such that every $f\in\Aut(X,\phi)$ satisfies $d(x,f(x))\leq R$ for all $x\in X$. 
\end{thm}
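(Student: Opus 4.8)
The target statement is equivalent to asking that every orbit of $\Aut(X,\phi)$ have diameter at most $R$, for then $d(x,f(x))\le\operatorname{diam}\bigl(\Aut(X,\phi)\cdot x\bigr)\le R$ for all $f$ and $x$; equivalently, $\phi$ should be \emph{coarsely rigid}, meaning that $(X,\phi,x)\cong(X,\phi,y)$ as rooted colored graphs forces $d(x,y)\le R$. We may assume $X$ is infinite (otherwise take $R=\operatorname{diam}X$ and any $\phi$). From symmetric growth (Definition~\ref{d:uniform}) I would extract a single non-decreasing $v\colon\N\to\N$ with $v(n)\to\infty$ and constants so that $v(n)\le|B(x,n)|\le v(\lambda n+c)$ for \emph{every} vertex $x$: balls of a given radius are comparable in size wherever they sit, and in particular $X$ has bounded degree, so $v$ grows at most exponentially. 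I would work constructively rather than by a random coloring, since a union bound over colorings is defeated by the counting phenomenon behind the difficulty of the infinite motion conjecture: a ball of radius $\rho$ may carry on the order of $v(\rho)!$ relevant partial automorphisms, far more than its $2^{v(\rho)}$ colorings.

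Fix a basepoint $x_{0}$ and a slowly increasing sequence of scales $\rho_{0}<\rho_{1}<\cdots$, each at most twice its predecessor. By a standard iterated-net construction, build partitions $\mathcal P_{0},\mathcal P_{1},\dots$ of $X$ into connected ``level-$k$ blocks'' of diameter $\asymp\rho_{k}$, with each level-$k$ block a union of level-$(k-1)$ blocks and with the level-$k$ block $P_{k}$ containing $x_{0}$ increasing to $X$. The crucial consequence of symmetric growth is a \emph{bit budget}: a level-$k$ block contains at most $v(\lambda'\rho_{k}+c')$ level-$(k-1)$ blocks, each level-$(k-1)$ block has at least $v(\rho_{k-1}/2)$ vertices, and — because $v$ is at most exponential while the $\rho_{k}$ at most double — one can choose $\rho_{0}$ large enough that $v(\rho_{k-1}/2)\ge\log_{2}v(\lambda'\rho_{k}+c')$ for all $k$; so a single level-$(k-1)$ block has room to store in binary the index of any level-$(k-1)$ block among the level-$(k-1)$ blocks of a common parent.

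Now I would define $\phi$ as follows. Plant a finite ``flag'' on a bounded ball around $x_{0}$, a colored pattern made unrepeatable by a mild run-length constraint obeyed by the rest of the coloring, so that every $f\in\Aut(X,\phi)$ fixes $x_{0}$ and is therefore an isometry. Then for each $k\ge1$ and each level-$(k-1)$ block $b$, choose inside $b$ a canonically located ``code region'' (using the fixed basepoint to break symmetry), disjoint from those used at other levels, and write there — in a self-delimiting, internally asymmetric binary code whose reading no automorphism can scramble — a name for $b$ among the level-$(k-1)$ blocks of its level-$k$ parent, the names being pairwise distinct inside each parent; the bit budget provides the room. Also arrange that the whole nested block structure is recoverable from the colors. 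Given such a $\phi$, let $f\in\Aut(X,\phi)$ and $v$ a vertex; choose $K$ with $v\in P_{K}$. Since $f$ fixes $x_{0}$ it fixes $P_{K}$; reading the names, $f$ permutes the level-$(K-1)$ blocks inside $P_{K}$ while preserving names, hence fixes each; descending through levels $K-1,K-2,\dots$, the map $f$ fixes the level-$0$ block of $v$, so $d\bigl(v,f(v)\bigr)\le 2\rho_{0}=:R$, a bound independent of $v$ and $f$.

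The main obstacle — and the place where symmetric growth is indispensable — is realizing this scheme with only two colors: the code regions must be placed $\Aut(X,\phi)$-equivariantly, the codes must be recoverable from the colors in an automorphism-invariant way (a marked start, non-overlapping code words, enough imposed asymmetry), the nested partitions must themselves be color-encoded so that every $f\in\Aut(X,\phi)$ preserves each $\mathcal P_{k}$, and the bookkeeping must prevent the levels from competing for vertices. All of this rests on the bit budget, which is a \emph{uniform} statement about ball sizes and fails as soon as growth is allowed to vary strongly with the basepoint — consistent with the example in Section~\ref{ss:counter} of a graph admitting no coarsely distinguishing $2$-coloring.
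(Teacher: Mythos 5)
Your proposal is genuinely different from the paper's proof, and although the broad intuition (use the bounded-degree counting slack provided by symmetric growth to ``write identifying labels'' in the coloring) is the same, the organizing device and the hard technical step are not. The paper does not build a hierarchy of nested partitions anchored at a fixed basepoint. It fixes a single scale $R$, takes one $(2R+1)$-separated $2R$-coarsely dense net $Y$, equips $Y$ with its Rips graph at scale $4R+1$, and gives $Y$ a distinguishing coloring by ``more than two colors.'' The two-to-many bridge is the key trick: an explicit partial $2$-coloring $\psi$ on all vertices except the odd-radius spheres $S(Y,r)$, $r\in B$, combined with the induced coloring $\bar\phi_r(y)=|S(y,r)\cap\phi^{-1}(1)|$. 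Since $\bar\phi_r$ only counts $1$'s on a sphere, it is automorphism-invariant \emph{by definition}, with no need for a marked start, a self-delimiting code, or a canonical orientation. And the role played in your sketch by recoverability of the nested blocks is played in the paper by a local characterization: $Y$ is close to the intrinsically defined set $Y'=\{z:\phi\equiv 0\ \text{on}\ D(z,1)\}$, which any $\phi$-preserving automorphism must preserve (Lemma~\ref{l:phiyprime}). There is also no flag: the conclusion is the uniform bound $d(x,f(x))\le 4R+1$, not that any particular vertex is fixed, so the basepoint machinery is avoided entirely.

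The gap in your sketch is precisely the cluster of steps you flag as ``the main obstacle,'' and I do not think it is merely a loose end. Three things must hold simultaneously: (a) each partition $\mathcal P_k$ must be preserved by every $f\in\Aut(X,\phi)$, which forces you to make the block boundaries readable from $\phi$ alone; (b) the code regions must be located equivariantly, even though at a fixed distance from $x_0$ there are in general many metrically indistinguishable vertices, so ``using the fixed basepoint to break symmetry'' does not by itself single out a code region or an order in which to read its bits; and (c) the self-delimiting codes, the block-boundary markers, the run-length constraint protecting the flag, and the fact that all of this is nested across infinitely many levels must coexist on a $2$-coloring without the constraints at one level destroying those at another. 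Your bit budget bounds the number of bits needed for the \emph{names}, but it does not account for the cost of encoding the partitions, the code-region boundaries, or the ``marked start'' needed to read a binary string in a graph with no intrinsic linear order. Until (a)--(c) are realized concretely, the claim that ``$f$ fixes $P_K$, hence permutes its level-$(K-1)$ blocks preserving names, hence descends'' is an assertion rather than a proof. By contrast, the paper's sphere-counting coloring simply has none of these problems: preservation of $Y$ comes from the $\psi$-pattern near $Y$, and preservation of the $Y$-coloring comes from the invariance of cardinalities under bijections. So while your hierarchical route is a reasonable thing to attempt, it needs the equivariant-encoding mechanism made explicit before it constitutes a proof.
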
 

Note that we obtain a uniform closeness parameter $R$ for all $f\in\Aut(X,\phi)$; furthermore, we make no assumption on the motion of the graph. A slight modification of the proof of Theorem~\ref{t:main} proves the infinite motion conjecture for graphs $X$ containing a vertex $x\in X$ such that the restriction $\iota\colon S_x\to \QI(X)$ is injective.
Let us rephrase this condition in a language closer to the statement of Conjecture~\ref{c:infinitemotionconjecture}.
Let $X$ be a connected graph and let $f\in\Aut(X)$. The \emph{geometric motion} of $f$ is then $\gm(f)=\sup\{d(x,f(x))\mid x\in X\}$; for a subset $A\subset\Aut(X)$, the geometric motion of $A$ is $\gm(A)=\sup\{\gm(f)\mid f\in A,\ f\ne\id\}$.  The definition of the ``closeness" relation  for functions yields that the restriction $\iota\colon A\to \QI(X)$ is injective if and only if $\gm(A)=\infty$.
The second main result of the paper then reads as follows.

\begin{thm}\label{t:motion}
	Let $X$ be a connected, locally finite graph with symmetric growth. If $m(X)=\infty$ and there exists $x\in X$ such that $\gm(S_x)=\infty$, then $D(X)\le 2$.
\end{thm}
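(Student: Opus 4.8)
The plan is to combine Theorem~\ref{t:main} with the extra hypotheses $m(X)=\infty$ and $\gm(S_x)=\infty$ to upgrade a coarsely distinguishing $2$-coloring into a genuinely distinguishing one. By Theorem~\ref{t:main}, fix $R\in\N$ and a coloring $\phi_0\colon X\to\{0,1\}$ such that every $f\in\Aut(X,\phi_0)$ satisfies $d(x,f(x))\le R$ for all $x\in X$. Now pick the distinguished vertex $x$ with $\gm(S_x)=\infty$. Since $\iota\colon S_x\to\QI(X)$ is injective exactly when $\gm(S_x)=\infty$, the remaining task is to modify $\phi_0$ inside the ball $B(x,R)$ — only finitely many vertices, by local finiteness — so that the only automorphism of $X$ preserving both $\phi_0$ outside $B(x,R)$ and the new colors inside is the identity.

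First I would observe that any automorphism $f$ preserving the modified coloring still preserves $\phi_0$ (since we only changed colors on $B(x,R)$, and we will arrange the new coloring to refine $\phi_0$ there in a way that still determines $\phi_0$), hence $d(y,f(y))\le R$ for all $y$; in particular $f(x)\in B(x,R)$, and more generally $f$ maps $B(x,R)$ into $B(x,2R)$ and is $R$-close to $\id$ everywhere. The key point is that such an $f$ is determined by its restriction to a bounded region together with the constraint of being close to $\id$: I want to show that if additionally $f$ fixes $x$, then $f=\id$. If $f$ fixes $x$ then $f\in S_x$; but $f$ is $R$-close to $\id$, so $\gm(f)\le R<\infty$, which by injectivity of $\iota$ on $S_x$ (equivalently $\gm(S_x)=\infty$ forces every non-identity element of $S_x$ to have infinite geometric motion) gives $f=\id$. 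So it suffices to choose the finitely many new colors on $B(x,R)$ so that every color-preserving automorphism fixes $x$.

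To force $f(x)=x$ I would use the finiteness of the relevant automorphism set and the infinite-motion hypothesis to break the remaining symmetry by a careful finite recoloring: encode a ``beacon'' at $x$. Concretely, the set of automorphisms preserving $\phi_0$ and close to $\id$ with parameter $R$, restricted to their action on the finite set $B(x,2R)$, is finite; enumerate the finitely many candidate images $x=y_0,y_1,\dots,y_k$ of $x$ under such automorphisms with $y_i\ne x$. For each such $y_i$ there is, by $m(X)=\infty$ applied to the stabilizer structure, enough ``room'' to distinguish $x$ from $y_i$ by recoloring a bounded set; I would instead take the cleaner route of recoloring $B(x,R)$ to a pattern that is \emph{rigid} — e.g.\ making the multiset of colors on successive spheres around $x$ strictly unlike the pattern around any $y_i$ — which is possible because there are only finitely many constraints and two colors give exponentially many patterns on the (nonempty, by connectedness and $X$ infinite) spheres. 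After this recoloring, any $f$ preserving the new coloring is forced to fix $x$, hence lies in $S_x$, hence by the previous paragraph equals $\id$; therefore the modified coloring is distinguishing and uses only $2$ colors, giving $D(X)\le 2$.

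The main obstacle I anticipate is the bookkeeping in the recoloring step: one must recolor only finitely many vertices (to preserve the global structure guaranteed by Theorem~\ref{t:main}) while simultaneously (i) not destroying the property ``every color-preserving automorphism is $R$-close to $\id$'' — this is why the new coloring must still allow one to recover $\phi_0$, or at least the argument of Theorem~\ref{t:main} must still apply — and (ii) killing every one of the finitely many potential non-identity behaviours at once. The delicate case is when $B(x,2R)$ itself has symmetries that cannot be broken by recoloring $B(x,R)$ alone; handling it may require enlarging the recolored region to $B(x,2R)$ or $B(x,3R)$ and re-running the closeness estimate, or appealing directly to the fact that $m(X)=\infty$ means no nontrivial automorphism can agree with $\id$ on a cofinite set, which forbids the pathological finite-support symmetries. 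I expect the cleanest write-up to interleave the recoloring with the proof of Theorem~\ref{t:main} rather than treating it as a black box, exactly as the introduction's phrase ``a slight modification of the proof of Theorem~\ref{t:main}'' suggests.
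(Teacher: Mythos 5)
Your key observation is the right one: if $f\in S_x$ and $f$ is $R$-close to $\id$, then $\gm(f)\le R<\infty$, and since $\gm(S_x)=\infty$ means every non-identity element of $S_x$ has infinite geometric motion (the paper's $\sup$ in the definition of $\gm(A)$ is a typo for $\inf$, as the proof makes clear), this forces $f=\id$. However, the mechanism you propose to ensure $f\in S_x$ has a genuine gap. You take $\phi_0$ from Theorem~\ref{t:main} as a black box and recolor $B(x,R)$ to get $\phi_1$, claiming that any $f\in\Aut(X,\phi_1)$ still preserves $\phi_0$ because the new coloring ``refines'' $\phi_0$. With only two colors there is no refinement: if $\phi_1\ne\phi_0$ on $B(x,R)$, an automorphism $f$ preserving $\phi_1$ can map vertices outside $B(x,R)$ into $B(x,R)$ where the colors were changed, so $f$ need not preserve $\phi_0$, hence the closeness estimate $d(y,f(y))\le R$ does not transfer to $\Aut(X,\phi_1)$. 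Your whole ``beacon'' argument relies on automorphisms of $(X,\phi_1)$ already being $R$-close, so the argument is circular at exactly the point where it needs to bite. Moreover, the recoloring step is only sketched: spheres around $x$ and around a potential image $y_i$ overlap, so the color patterns cannot be chosen independently, and a multiset argument on sphere contents does not by itself rule out a coloring-preserving isomorphism moving $x$.

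The paper avoids all this by \emph{not} treating Theorem~\ref{t:main} as a black box. Instead it re-runs the construction, using Lemma~\ref{l:delone} to choose the $(2R+1)$-separated, $2R$-coarsely dense set $Y$ with $x\in Y$. Then $m(X)=\infty$ enters through Corollary~\ref{c:motion}, giving $f(Y)=Y$ for every $f\in\Aut(X,\phi)$; combined with Lemma~\ref{l:auto} and the distinguishing property of $\bar\phi$, this yields $f|_Y=\id_Y$. Since $x\in Y$, we get $f\in S_x$ with no recoloring and no beacon, and since $Y$ is $2R$-coarsely dense and fixed pointwise, $\gm(f)\le 4R$, contradicting $\gm(S_x)=\infty$ unless $f=\id$. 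Your closing remark that the clean write-up should interleave with the proof of Theorem~\ref{t:main} is exactly right, and is where the actual argument lives; the black-box version you developed in detail does not close.
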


In Sections~\ref{ss:dl} and~\ref{ss:boundedcycle} we present two families of graphs satisfying the hypothesis of Theorem~\ref{t:motion}: the \emph{Diestel-Leader graphs} $\DL(p,q)$, $p,q\geq 2$, and graphs with \emph{bounded cycle length}. As far as the authors know, the existence of distinguishing colorings by $2$ colors for these graphs had not been established before. The origin of Diestel-Leader graphs goes back to the following question, posed in~\cite{SoardiWoess,Woess} by W.\ Woess:
\begin{quest}\label{q:qi}
	Is there a locally finite vertex-transitive graph that is not quasi-isometric to the Cayley graph of some finitely generated group?
\end{quest}
R.\ Diestel and I.\ Leader introduced in~\cite{DiestelLeader} the graph $\DL(2,3)$, and conjectured that it satisfies the conditions of Question~\ref{q:qi}. A.\ Eskin, D.\ Fisher, and K.\ Whyte proved in~\cite{EskinFisherWhyteQ,EskinFisherWhyteI,EskinFisherWhyteII} that in fact all graphs $\DL(p,q)$ with $p\neq q$ answer Question~\ref{q:qi} positively. On the other hand, graphs with bounded cycle length are hyperbolic (in the sense of Gromov) and contain as examples free products of finite graphs.

We can sketch the idea behind the proof as follows: Choose a suitable $R>0$ and a subset $Y\subset X$ such that $d(x,Y)\leq R$ for all $x\in X$. Suppose that there is a partial coloring $\psi$ by two colors such that, if $\phi\colon X\to\{0,1\}$ is an extension of $\psi$ and $f$ is an automorphism of $X$ preserving $\phi$,  then $f(Y)=Y$. Thus we can regard every extension $\phi$ of $\psi$ as a coloring $\bar\phi\colon Y\to\N$ by more than two colors. The hypothesis of symmetric growth  ensures that, for $R$ large enough, we have  sufficiently many local extensions of $\psi$ around every point $y\in Y$ so that, gluing them, we can find a global extension $\phi$ with $\bar\phi$ distinguishing. Theorems~\ref{t:main} and~\ref{t:motion}  then follow from a simple geometrical argument. In general, we cannot find a partial coloring $\psi$ as above, but the same idea works with minor modifications; this technique is  similar to that used in~\cite{AlvarezBarral}.

The outline of the paper is as follows: In the next section we introduce some preliminaries to be used in the proof of the main theorems, which comprises Sections~\ref{s:coloring} and~\ref{s:growth}. Finally, Section~\ref{s:examples} contains several examples illustrating some of the concepts that appear in the paper.

\section{Preliminaries}\label{s:preliminaries}

In what follows we only consider undirected, simple graphs, so there are no loops and no multiple edges.
We  identify a graph with its vertex set, and by abuse of notation we  write $X=(X,E_X)$. The \emph{degree} of a vertex $x\in X$, $\deg x$, is the number of edges incident to $x$, and the degree of $X$ is $\deg X=\sup\{\deg x\mid x\in X\}$. A graph $X$ is \emph{locally finite} if $\deg x<\infty$ for all $x\in X$. 
A \emph{path} $\gamma$ in $X$ of \emph{length} $l\in\N$ is a finite sequence $x_0,x_1,\ldots,x_l$ of vertices such that $x_{i-1}E_{X}x_{i}$ for all $i=1,\ldots,l$; when the sequence of vertices is infinite, we call $\gamma$ a \emph{ray}. We may also think of a path (respectively, a ray) as a function $\sigma\colon \{0,\ldots,n\}\to X$ (respectively, $\sigma\colon \N\to X$). A graph is \emph{connected} if every two vertices can be joined by a path. 
All graphs in this paper are assumed to be  connected and locally finite, hence countable.  We consider every graph to be endowed with its canonical $\N$-valued metric, where $d(x,y)$ is the length of the shortest path joining $x$ and $y$; a length-minimizing path is termed a \emph{geodesic path}.

A \emph{partial coloring} of a graph $X$ is a map $\psi\colon Y\to \N$, where $Y\subset X$; if $Y=X$, we simply call $\psi$ a \emph{coloring}.  We use the term (partial) \emph{$2$-coloring} when $\psi$ takes values in $\{0,1\}$. For every graph $X$ and  coloring $\phi\colon X\to\N$, let  $\Aut(X,\phi)$ denote the group of automorphisms $f$ of $X$ satisfying $\phi=\phi\circ f$. 
A coloring $\phi\colon X\to \N$ is  \emph{distinguishing} if $\Aut(X,\phi)=\{\id\}$.

For a graph $X$, $x\in X$, and $r\in\N$, let  
\[
D(x,r)=\{\, y\in X\mid d(y,x)\leq r\,\}, \qquad S(x,r)=\{\,y\in X\mid d(y,x)=r\,\}
\]
denote the \emph{disk}  and the \emph{sphere} of center $x$ and radius $r$, respectively. 
We may write $D_X(x,r)$ for $D(x,r)$ when the ambient space $X$ is not clear from context.
A subset $Y$ of $X$ is  \emph{$R$-separated} ($R>0$)  if $d(y,y')\geq R$ for all $y,y'\in Y$ with $y\neq y'$; it is \emph{$R$-coarsely dense} if, for every $x\in X$, there is some $y\in Y$ with $d(x,y)\leq R$. The next result follows from a simple application of Zorn's Lemma. 
\begin{lem}[E.g.~{\cite[Cor.~2.2.]{AlvarezBarral}}]\label{l:delone}
	Let $X$ be a graph and let $R>0$. For every $x\in X$,  there is a $(2R+1)$-separated, $2R$-coarsely dense subset $Y\subset X$ containing $x$.
\end{lem}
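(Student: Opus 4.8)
The plan is to prove Lemma~\ref{l:delone} --- the existence of a $(2R+1)$-separated, $2R$-coarsely dense subset containing a prescribed point $x$ --- by a standard Zorn's Lemma argument applied to the poset of $(2R+1)$-separated subsets containing $x$.

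\medskip

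First I would fix $R>0$ and $x\in X$, and consider the collection
\[
\mathcal{P}=\{\,Y\subset X\mid x\in Y\text{ and }Y\text{ is }(2R+1)\text{-separated}\,\},
\]
partially ordered by inclusion. This is nonempty since $\{x\}\in\mathcal{P}$ (the separation condition is vacuous for a one-point set). Next I would check that $\mathcal{P}$ is closed under unions of chains: if $\mathcal{C}\subset\mathcal{P}$ is a chain, then $Z=\bigcup_{Y\in\mathcal{C}}Y$ contains $x$, and for any two distinct points $z,z'\in Z$ there is some $Y\in\mathcal{C}$ containing both (using that $\mathcal{C}$ is a chain), whence $d(z,z')\geq 2R+1$; so $Z\in\mathcal{P}$ is an upper bound for $\mathcal{C}$. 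By Zorn's Lemma, $\mathcal{P}$ has a maximal element $Y$.

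\medskip

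Then I would verify that a maximal $Y\in\mathcal{P}$ is $2R$-coarsely dense. Suppose not: there is $w\in X$ with $d(w,Y)\geq 2R+1$. Then $Y\cup\{w\}$ still contains $x$, and is still $(2R+1)$-separated, since for every $y\in Y$ we have $d(w,y)\geq 2R+1$; this contradicts the maximality of $Y$. Hence $d(w,Y)\leq 2R$ for every $w\in X$, which is exactly $2R$-coarse density. This completes the proof.

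\medskip

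I do not anticipate any real obstacle here; the argument is entirely routine, and the only point requiring a moment's care is the bookkeeping with the constants ($2R+1$ versus $2R$), which is arranged precisely so that adding a point at distance $\geq 2R+1$ from $Y$ preserves separation --- making maximality incompatible with the failure of $2R$-coarse density. (One could alternatively phrase this without Zorn's Lemma for locally finite $X$ via a greedy enumeration, but the Zorn argument is cleaner and needs no countability input at this stage.)
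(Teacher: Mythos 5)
Your proof is correct and takes precisely the approach the paper indicates (the paper states the lemma "follows from a simple application of Zorn's Lemma" and defers to a reference). The only point worth noting is that your negation of $2R$-coarse density as "there exists $w$ with $d(w,Y)\geq 2R+1$" relies on the metric being $\N$-valued together with $2R$ being an integer; this is harmless here since $R$ is taken to be a (large odd) integer throughout the paper.
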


Let $\bx\colon \N\to\N$ and $\sx\colon\N\to\N$ be the functions defined by
\[
\bx(r)=|D(x,r)|, \qquad \sx(r)=|S(x,r)|.
\] 
Given two non-decreasing functions $f,g\colon \N \to \R^+$, $f$ is \emph{dominated} by $g$ if there are integers $k,l,m$ such that $f(r)\leq kg(lr)$ for all $r\geq m$.  Two functions have the same \emph{growth type} if they dominate one another. The growth type of $\bx$ does not depend on the choice of point $x\in X$, so every graph has a well-defined growth type. The  functions $\bx$, $x\in X$, however, may not dominate one another with a uniform choice of constants, which motivates the following definition.

\begin{defn}[{\cite[Def.~4.13]{AlvarezCandel}}]\label{d:uniform}
	A graph $X$ has \emph{symmetric growth} if there are $k,l,m\in\N$ such that $\bx(r)\leq k\beta_y(lr)$ for all $r\geq m$ and $x,y\in X$.
\end{defn}

\begin{lem}
	If $X$ has symmetric growth, then $\deg X<\infty$.
\end{lem}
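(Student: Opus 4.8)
The plan is to argue by contradiction, exploiting the fact (recorded earlier in the paper) that every graph under consideration is connected and locally finite, so that all disks $D(x,r)$ are finite sets and the functions $\beta_x$ take finite values. Suppose, for contradiction, that $\deg X=\infty$. Then there is a sequence of vertices $x_n\in X$ with $\deg x_n\to\infty$. Fix an arbitrary base vertex $y\in X$, and let $k,l,m\in\N$ be the constants provided by the definition of symmetric growth. Set $r=\max\{m,1\}$, so that $r\geq m$ and the inequality $\beta_{x_n}(r)\leq k\,\beta_y(lr)$ holds for every $n$.

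The right-hand side $k\,\beta_y(lr)$ is a fixed finite number, call it $C$, since $X$ is locally finite and hence $|D(y,lr)|<\infty$. On the other hand, because $\beta_{x_n}$ is non-decreasing and $r\geq 1$, we have $\beta_{x_n}(r)\geq\beta_{x_n}(1)=|D(x_n,1)|=1+\deg x_n$. Combining the two estimates yields $1+\deg x_n\leq C$ for all $n$, which contradicts $\deg x_n\to\infty$. Therefore $\deg X<\infty$. There is no real obstacle here; the only points requiring a word of care are the use of local finiteness to guarantee that $\beta_y(lr)$ is finite, and the choice $r=\max\{m,1\}$ so that the defining inequality of symmetric growth applies while still dominating $\beta_{x_n}(1)$.
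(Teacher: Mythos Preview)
Your proof is correct and follows essentially the same approach as the paper: both arguments bound $\deg y$ by applying the symmetric growth inequality with a fixed reference vertex and using $\beta_y(1)=1+\deg y$. The paper states the bound directly as $\deg y<\beta_y(1)\leq k\beta_x(lm)<\infty$ for all $y$, whereas you wrap the same estimate in a contradiction via a sequence $x_n$; the contradiction framing is unnecessary (your inequality $1+\deg x_n\leq C$ already gives a uniform bound on all degrees), but the substance is identical.
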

\begin{proof}
Let $x\in X$, then we have $\deg y< \beta_y(1)\leq k\beta_x(lm)<\infty$ for every $y\in X$.
\end{proof}

Let $X$ be a graph with $\Delta:=\deg X<\infty$, then the following  holds for all $x\in X$ and $r\geq 1$~\cite[Lem.~2.12]{AlvarezBarral}:
\begin{align}
\sx(1)&\leq \Delta,    \label{sx1}       \\
\sx(r+1)&\leq \sx(r)(\Delta-1), \label{sxrplus1}\\
\sx(r+1)&\leq \Delta(\Delta-1)^{r}. \label{sxr}
\end{align}
We will later fix a graph with $\Delta>2$; note that in this case $\Delta/(\Delta-2)\le 3$, so
\begin{equation}\label{bxr}
\beta_x(r) \le 1 + \Delta \sum_{s=0}^{r-1} (\Delta -1)^s 
= 1 +  \frac{\Delta((\Delta -1)^r - 1)}{\Delta - 2} 
\le 1 + 3 (\Delta - 1)^r - 1 
= 3 (\Delta - 1)^r.
\end{equation}

We say that $X$ has \emph{exponential growth} if $\liminf \frac{\log \bx(r)}{r}>0$ for some, and hence all $x\in X$, else it has \emph{subexponential growth}. The following lemmas have elementary proofs.

\begin{lem}\label{l:uniformexp}
	Let $X$ be a graph with symmetric exponential growth. Then there are $k,l,m\in\N$ such that $e^{r}\leq k\bx(lr)$ for all $x\in X$ and $r\geq m$.
\end{lem}

\begin{lem}\label{l:uniformsub}
	If $X$ has symmetric subexponential growth, then, for every $a,b>0$, there is some $m\in\N$ such that $\bx(r)\leq ae^{br}$ for all $x\in X$ and $r\geq m$.
\end{lem}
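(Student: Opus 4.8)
The plan is to pass from the individual functions $\beta_x$ to the single function $\beta^*(r):=\sup_{x\in X}\beta_x(r)$, show that $\beta^*$ is submultiplicative, and then apply Fekete's subadditive lemma. The point worth emphasizing is that the $\liminf$ form of ``subexponential growth'' does \emph{not} by itself prevent $\beta_x$ from oscillating, so the bound $\beta_x(r)\le ae^{br}$ for \emph{all} large $r$ cannot be read off directly; what rescues us is that submultiplicativity upgrades the $\liminf$ to a genuine limit, and the uniformity in $x$ is then automatic because everything is phrased through $\beta^*$.

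\textbf{Step 1: $\beta^*$ is finite, non-decreasing, and submultiplicative.} Since $X$ has symmetric growth we have $\Delta:=\deg X<\infty$, so by~\eqref{sxr} each $\beta_x(r)=\sum_{s=0}^{r}\sigma_x(s)\le 1+\sum_{s=1}^{r}\Delta(\Delta-1)^{s-1}$, a bound depending only on $r$ and $\Delta$; hence $\beta^*(r)<\infty$ for every $r$, and clearly $\beta^*$ is non-decreasing with $\beta^*(r)\ge 1$. For the submultiplicativity, note that for any $x\in X$ and $r,s\in\N$ one has $D(x,r+s)\subseteq\bigcup_{z\in D(x,r)}D(z,s)$ (follow a geodesic from $x$ to a point of $D(x,r+s)$ and stop at distance $\min\{r,\cdot\}$), so
\[
\beta_x(r+s)\le\sum_{z\in D(x,r)}\beta_z(s)\le\beta_x(r)\,\beta^*(s)\le\beta^*(r)\,\beta^*(s) .
\]
Taking the supremum over $x$ gives $\beta^*(r+s)\le\beta^*(r)\beta^*(s)$, i.e.\ $\log\beta^*$ is a non-negative subadditive function on $\N$. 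By Fekete's lemma, $\lim_{r\to\infty}\tfrac{\log\beta^*(r)}{r}$ exists and equals $\lambda:=\inf_{r\ge 1}\tfrac{\log\beta^*(r)}{r}\ge 0$.

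\textbf{Step 2: $\lambda=0$.} Fix $x_0\in X$. By symmetric growth there are $k,l,m_0$ with $\beta_x(r)\le k\beta_{x_0}(lr)$ for all $x$ and $r\ge m_0$, hence $\beta^*(r)\le k\beta_{x_0}(lr)$ for $r\ge m_0$. Since $X$ has subexponential growth, $\liminf_r\tfrac{\log\beta_{x_0}(r)}{r}=0$, so there is a sequence $r_n\to\infty$ with $\tfrac{\log\beta_{x_0}(r_n)}{r_n}\to 0$. Set $t_n=\lfloor r_n/l\rfloor$, so $lt_n\le r_n$, $t_n\to\infty$, and $r_n/t_n\to l$. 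Then for $n$ large (so that $t_n\ge m_0$), using that $\beta_{x_0}$ is non-decreasing,
\[
\frac{\log\beta^*(t_n)}{t_n}\le\frac{\log k+\log\beta_{x_0}(r_n)}{t_n}=\frac{\log k}{t_n}+\frac{r_n}{t_n}\cdot\frac{\log\beta_{x_0}(r_n)}{r_n}\ \longrightarrow\ 0 .
\]
Therefore $\lambda=\liminf_r\tfrac{\log\beta^*(r)}{r}\le\liminf_n\tfrac{\log\beta^*(t_n)}{t_n}=0$, and since $\lambda\ge 0$ we conclude $\lambda=0$.

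\textbf{Step 3: conclusion.} Given $a,b>0$, choose $\epsilon\in(0,b)$. Since $\tfrac{\log\beta^*(r)}{r}\to\lambda=0$, there is $m_1$ with $\beta^*(r)\le e^{\epsilon r}$ for all $r\ge m_1$. Put $m=\max\{m_1,\lceil\tfrac{1}{b-\epsilon}\log^+(1/a)\rceil\}$, where $\log^+=\max\{\log,0\}$. Then for every $x\in X$ and every $r\ge m$ we get $\beta_x(r)\le\beta^*(r)\le e^{\epsilon r}\le ae^{br}$, which is the claim. The only non-routine point is Step 1 together with the realization in Step 2 that subexponential growth, although only phrased as a $\liminf$ condition, forces the Fekete limit $\lambda$ to vanish; the rest is bookkeeping.
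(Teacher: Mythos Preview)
Your proof is correct. The paper does not actually supply a proof of this lemma; it merely states that Lemmas~\ref{l:uniformexp} and~\ref{l:uniformsub} ``have elementary proofs,'' so there is no argument to compare against directly. Your route via the supremum function $\beta^*$, its submultiplicativity, and Fekete's lemma is precisely the natural elementary argument one would expect here, and your emphasis on the subtlety that the $\liminf$ definition of subexponential growth does not by itself yield a bound valid for \emph{all} large $r$ (so that Fekete is genuinely doing work) is well placed.
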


\section{Construction of the coloring}\label{s:coloring}

Let $R$ be a large enough odd number, to be determined later.  Let $Y$ be a $(2R+1)$-separated, $2R$-coarsely dense subset of $X$; we define a  graph structure $E_Y$ on $Y$ as follows:
\begin{equation}\label{ey}
yE_Yy'\quad \text{if and only if}\quad 0<d(y,y')\leq 4R+1. 
\end{equation}

\begin{lem}\label{l:yconnected}
	The graph $(Y,E_Y)$ is connected with $\deg_Y y\leq |D_X(y,4R+1)|-1$ for all $y\in Y$.
\end{lem}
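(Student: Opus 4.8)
The plan is to treat the two assertions of Lemma~\ref{l:yconnected} independently, deriving both directly from the definition~\eqref{ey} of $E_Y$ together with the fact that $Y$ is $2R$-coarsely dense in $X$; the $(2R+1)$-separation of $Y$ will play no role here.

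First I would dispose of the degree estimate. By~\eqref{ey}, the $E_Y$-neighbors of a vertex $y\in Y$ are precisely the $y'\in Y$ with $0<d_X(y,y')\le 4R+1$, i.e.\ the elements of $\bigl(D_X(y,4R+1)\cap Y\bigr)\setminus\{y\}$. Hence $\deg_Y y=|D_X(y,4R+1)\cap Y|-1\le|D_X(y,4R+1)|-1$, as required. This step is purely formal.

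Next, for connectedness, I would fix $y,y'\in Y$ and produce an $E_Y$-path joining them. Since $X$ is connected, choose a path $y=x_0,x_1,\dots,x_n=y'$ in $X$. Using $2R$-coarse density, pick for each $i$ some $y_i\in Y$ with $d_X(x_i,y_i)\le 2R$, taking in particular $y_0=y$ and $y_n=y'$ (legitimate since $x_0,x_n$ already lie in $Y$, at distance $0$ from themselves). The triangle inequality then gives, for every $i$,
\[
d_X(y_i,y_{i+1})\le d_X(y_i,x_i)+d_X(x_i,x_{i+1})+d_X(x_{i+1},y_{i+1})\le 2R+1+2R=4R+1,
\]
so each consecutive pair $y_i,y_{i+1}$ is either equal or $E_Y$-adjacent. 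Deleting repetitions from the sequence $y_0,\dots,y_n$ yields a genuine $E_Y$-path from $y$ to $y'$, which shows that $(Y,E_Y)$ is connected.

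There is no serious obstacle in this lemma; the only points requiring a moment's attention are anchoring the chain of approximating vertices at the prescribed endpoints $y$ and $y'$ rather than at arbitrary nearby points, and checking that the coarse-density bound $2R$ together with the unit steps of the $X$-path stays within the adjacency threshold $4R+1$ imposed by~\eqref{ey}.
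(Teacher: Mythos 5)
Your proof is correct and follows essentially the same approach as the paper: the degree bound is read off directly from the definition of $E_Y$, and connectedness is obtained by taking a path in $X$, replacing each intermediate vertex with a nearby point of $Y$ via $2R$-coarse density, and applying the triangle inequality to land within the adjacency threshold $4R+1$. Your explicit note about deleting repeated consecutive $y_i$ is a small tidiness improvement the paper glosses over, but the argument is the same.
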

\begin{proof}
	The inequality follows trivially from~\eqref{ey}, so let us prove that $Y$ is connected. Let $y,y'\in Y$, and let $(y,x_1,\ldots,x_{n-1},y')$ be a path in $X$. Since $Y$ is $2R$-coarsely dense, for every $i=1,\ldots,n$ there is some $y_i\in Y$ with $d_X(x_i,y_i)\leq 2R$. The triangle inequality and~\eqref{ey} then yield that $(y,y_1,\ldots,y_{n-1},y')$ is a path on $(Y,E_Y)$.
\end{proof}

Recall that $R$ is a large enough odd number, so assume $R\geq 5$. Let
\begin{equation}\label{defnab}
A=\{\, 2n\mid 2\leq n\leq\frac{R-1}{2}\, \},\qquad B=\{\,2n+1\mid 1\leq n\leq \frac{R-1}{2} \,\},
\end{equation}
and, for $r\leq R$, let  	
\[
D(Y,r)=\bigcup_{y\in Y}D(y,r), \qquad S(Y,r)=D(Y,r)\setminus D(Y,r-1)=\bigcup_{y\in Y}S(y,r),
\]
where the last equality holds because $Y$ is $(2R+1)$-separated. Let us define a partial coloring
\[
\psi\colon X\setminus \bigcup_{r\in B}S(Y,r)\to \{0,1\}
\]
as follows (Cf.~\cite[Lem.~3.2]{CunoImrichLehner}, see Figure~\ref{f:phi} for an illustration):
\begin{equation}\label{defnpsi}
\psi(x)=\begin{cases}
0, &x\in \bigcup_{r=0,1}S(Y,r),\\
1, &x\in S(Y,2),\\
1, &x\in \bigcup_{r\in A}S(Y,r),\\
1, &x\notin D(Y,R).
\end{cases}
\end{equation}

\begin{figure}[th]
	\includegraphics{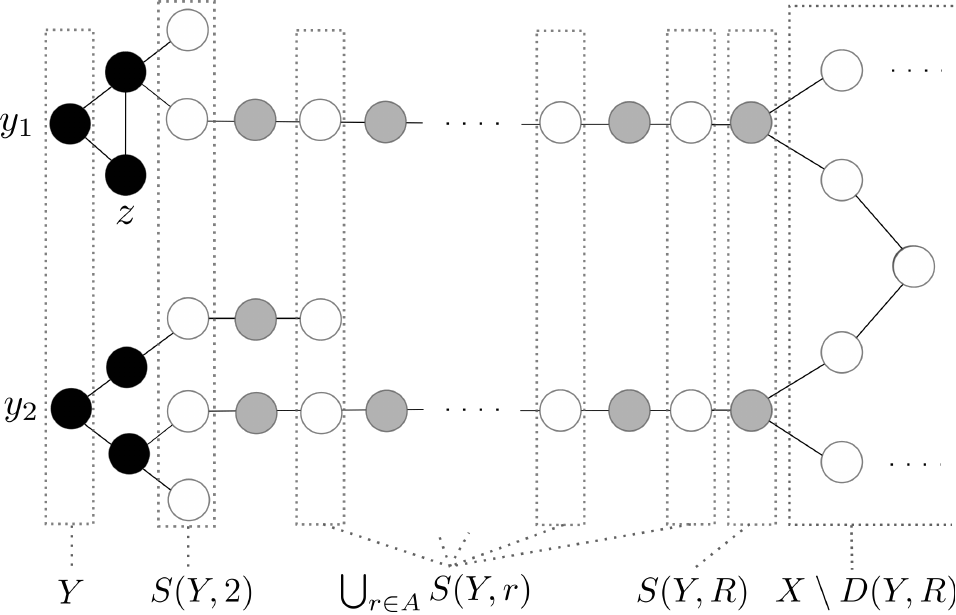}
	\caption{An illustration of the coloring $\psi$, where $y_1,y_2\in Y$, black represents the color $0$, and white represents $1$. The grey vertices are those where $\psi$ is not defined.}
	
	\label{f:phi}
\end{figure}

\begin{lem}[Cf.~{\cite[Lem.~3.2.]{CunoImrichLehner}}]\label{l:phiyprime}
	Let $\phi\colon X\to \{0,1\}$ be an extension of $\psi$, and let $f\in\Aut(X,\phi)$. For each $y\in Y$, there is some $\bar y\in Y$ such that $d(\bar y,f(y))\leq 1$ and $d(z,\bar y)=d(z,f(y))$ for all $z\in X\setminus\{\bar y,f(y)\}$.
\end{lem}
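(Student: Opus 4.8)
The plan, following the strategy of~\cite[Lem.~3.2]{CunoImrichLehner}, is to use the two colors to force $f(y)$ to within distance $1$ of $Y$, and then to read off the \emph{true twin} relation between $f(y)$ and a nearby vertex of $Y$ from a local analysis of $\phi$. Two facts will be used throughout. Since $Y$ is $(2R+1)$-separated, distinct elements of $Y$ are more than $2R+1$ apart, whence for every $y'\in Y$ and $r\le R$ one has $S(Y,r)\cap D(y',R)=S(y',r)$, and in particular $S(y',r)\subseteq S(Y,r)$. And since $f$ is a color-preserving isometry, $\phi=\phi\circ f$ and $f(S(z,r))=S(f(z),r)$ for all $z\in X$ and $r\in\N$.

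First I would show $d(f(y),Y)\le 1$. From $\psi(y)=0$ we get $\phi(f(y))=0$, so I assume $d(f(y),Y)\ge 2$ and seek a contradiction. If $f(y)\notin D(Y,R)$, or if $f(y)\in S(Y,r)$ with $r\in\{2\}\cup A$, then $\psi(f(y))=1$ directly from the definition of $\psi$, contradicting $\phi(f(y))=0$. Since $\{2\}\cup A\cup B=\{2,3,\dots,R\}$, the only remaining case is $f(y)\in S(Y,r)$ with $r\in B$, i.e.\ $r$ odd and $3\le r\le R$. Here I would pass to a vertex $w$ adjacent to $f(y)$ lying on a geodesic from $f(y)$ to some $y'\in Y$ with $d(f(y),y')=r$; then $d(w,y')=r-1\in\{2\}\cup A$, so $w\in S(y',r-1)\subseteq S(Y,r-1)$ gives $\phi(w)=1$, whereas $w\in S(f(y),1)=f(S(y,1))\subseteq f(S(Y,1))$ forces $\phi(w)=0$. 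This contradiction leaves $d(f(y),Y)\le 1$.

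Next I would produce $\bar y$ and establish the twin condition. If $f(y)\in Y$, take $\bar y=f(y)$ and there is nothing to prove; otherwise $d(f(y),Y)=1$, and $(2R+1)$-separatedness yields a unique $\bar y\in Y$ with $d(\bar y,f(y))=1$. For adjacent vertices $u,v$, the statement ``$d(z,u)=d(z,v)$ for all $z\notin\{u,v\}$'' is equivalent to the equality of closed neighborhoods $S(u,1)\cup\{u\}=S(v,1)\cup\{v\}$ (the elementary characterization of \emph{true twins}), so it suffices to prove $S(f(y),1)\setminus\{\bar y\}=S(\bar y,1)\setminus\{f(y)\}$. For one inclusion: if $w\in S(f(y),1)$ and $w\ne\bar y$, then $w\in f(S(y,1))\subseteq f(S(Y,1))$ gives $\phi(w)=0$, while $d(w,\bar y)=2$ would give $w\in S(\bar y,2)\subseteq S(Y,2)$ and hence $\phi(w)=1$; so $d(w,\bar y)=1$. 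For the other: if $u\in S(\bar y,1)$ and $u\ne f(y)$, then $u\in S(Y,1)$ gives $\phi(u)=0$, while $d(u,f(y))=2$ would give $u\in S(f(y),2)=f(S(y,2))\subseteq f(S(Y,2))$ and hence $\phi(u)=1$; so $d(u,f(y))=1$. That proves the equality, and thus the distance condition in the statement.

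The only real obstacle I foresee is the bookkeeping with the separation constant: at each step one has to confirm that the vertex just produced genuinely lies in the shell $S(Y,r)$ it is supposed to — that is, that its nearest point of $Y$ is the intended one and at the intended distance — rather than in a closer shell around a different element of $Y$, and this is precisely what $(2R+1)$-separatedness is for. Once these shell identifications are secured, both inclusions collapse to the fact that $\phi$ cannot assign a vertex both colors. I would also note that the undefined shells $S(Y,r)$, $r\in B$, never enter through a color value, so the arbitrariness of the extension $\phi$ on them causes no trouble; they matter only as an a priori location for $f(y)$, which the argument on $d(f(y),Y)$ above rules out.
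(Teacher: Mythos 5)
Your proof is correct and follows essentially the same route as the paper: pin $f(y)$ to within distance $1$ of $Y$ using the colors of $f(y)$ and its neighbors, then use the colors in $S(Y,1)$ and $S(Y,2)$ (and their $f$-images) to show $f(y)$ and $\bar y$ have the same closed neighborhood, which gives the true-twin distance equality. The paper packages the first step more compactly by introducing the $f$-invariant set $Y'=\{z\mid \phi\equiv 0 \text{ on } D(z,1)\}\subset D(Y,1)$, whereas you run the same argument as an explicit case analysis on $d(f(y),Y)$, but the underlying ideas and uses of the $(2R+1)$-separation are identical.
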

\begin{proof}
    Let 
    \[
    Y' = \{ \, z \in X \mid \phi (z') = 0\ \text{for all}\ z' \in D(z,1) \, \},
    \]
    then~\eqref{defnpsi} yields $Y' \subset D(Y,1)$, and clearly $f(Y') = Y'$ for all $f \in \Aut(X,\phi)$.  For $y\in Y$, let $\bar{y}$ be the unique vertex in $Y$ which is adjacent to $f(y)$. We have $\phi(z) = 0$ for every vertex $z\in D(f(y),1)$ and $D(f(y),1)\subset D(\bar y,2)$, so $D(f(y),1)\subset D(\bar y,1)$ by~\eqref{defnpsi}. Since $D(\bar y,1)\subset D(f(y),2)$, we also get $D(\bar y,1)\subset D(f(y),1)$, and the result follows.
\end{proof} 

\begin{cor}\label{c:motion}
	If $X$ has infinite motion, then $f(Y)=Y$.
\end{cor}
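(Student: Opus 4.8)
The plan is to convert the distance property of Lemma~\ref{l:phiyprime} into an automorphism of $X$ of motion at most $2$, and then use the hypothesis $m(X)=\infty$ to force $f(y)=\bar y$ for every $y\in Y$.

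Concretely, I would fix $y\in Y$ and take the vertex $\bar y\in Y$ given by Lemma~\ref{l:phiyprime}, so that $d(\bar y,f(y))\le 1$ and $d(z,\bar y)=d(z,f(y))$ for all $z\in X\setminus\{\bar y,f(y)\}$. Let $g\colon X\to X$ be the transposition that interchanges $\bar y$ and $f(y)$ and fixes every other vertex. The key step is to check that $g\in\Aut(X)$. Since $g$ is an involution it suffices to show that $uE_Xv$ implies $g(u)E_Xg(v)$, and because adjacency is equivalent to being at distance $1$, the only case that is not immediate is the one in which exactly one of $u,v$ — say $u$ — lies in $\{\bar y,f(y)\}$ while the other does not; then $g(u)$ is the remaining element of $\{\bar y,f(y)\}$, $g(v)=v$, and $d(g(u),v)=d(u,v)$ by the displayed equality, so adjacency is preserved. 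When $\{u,v\}=\{\bar y,f(y)\}$ the claim is trivial by symmetry of $d$.

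Once $g\in\Aut(X)$ is established, note that $g$ fixes every vertex outside $\{\bar y,f(y)\}$, so $m(g)\le 2$. If we had $\bar y\ne f(y)$, then $g$ would be a non-identity automorphism of finite motion, contradicting $m(X)=\infty$. Hence $f(y)=\bar y\in Y$. Since $y\in Y$ was arbitrary, $f(Y)\subset Y$; applying the same argument to $f^{-1}\in\Aut(X,\phi)$ gives $f^{-1}(Y)\subset Y$, and therefore $f(Y)=Y$.

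I expect the only delicate point to be the verification that the swap of $\bar y$ and $f(y)$ respects the edge relation; everything else is formal. In particular, no further properties of $\psi$ are needed beyond what Lemma~\ref{l:phiyprime} already extracts, and the separatedness and coarse density of $Y$ play no role here.
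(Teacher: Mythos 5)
Your proof is correct and follows essentially the same route as the paper: both use the distance identity from Lemma~\ref{l:phiyprime} to show that if $f(y)\neq\bar y$ then the transposition of $f(y)$ and $\bar y$ is a non-identity automorphism of motion at most $2$, contradicting $m(X)=\infty$. You are slightly more explicit than the paper in two places — spelling out why the transposition respects the edge relation, and applying the argument to $f^{-1}$ to upgrade $f(Y)\subseteq Y$ to $f(Y)=Y$ — but the underlying idea is identical.
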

\begin{proof}
	 Let $f\in\Aut(X,\phi)$ and suppose $f(y)\neq \bar y$. By the previous lemma we have $S(f(y),1)=S(\bar y,1)$, so there is a non-trivial automorphism  exchanging $f(y)$ and $\bar y$ and leaving all other vertices in $X$ fixed. This contradicts the assumption that $X$ has infinite motion.
\end{proof}
\begin{rem}
Note that there might be automorphisms $f\in\Aut(X,\phi)$ with $f(Y)\neq Y$  when $m(X)<\infty$. The graph in Figure~\ref{f:phi} provides such an example: the map $f$ that interchanges $y_1$ and $z$ and leaves the rest of vertices fixed is an automorphism preserving $\psi$, but $f(Y)\neq Y$.
\end{rem}

Since $\dom\psi=X\setminus\bigcup_{r\in B}S(Y,r)$, an extension of $\psi$ to $X$ is the same thing as a coloring of $\bigcup_{r\in B}S(Y,r)$; for such an extension $\phi$, let $\bar \phi$ denote the induced coloring $Y\to \prod_B \N$ defined by
\begin{equation}\label{tildephi}
\bar \phi(y)=(\bar\phi_r(y))_{r\in B},\quad \text{where}\ \bar\phi_r(y)=|S(y,r)\cap \phi^{-1}(1)|.
\end{equation}

\begin{lem}\label{l:tildephiexists}
If $\xi:=(\xi_r)_{r\in B}\colon Y\to \prod_B \N$ is such that $\xi_r(y)\leq \sigma_y(r)$ for every $y\in Y$, then there is at least one extension $\phi$ satisfying $\bar \phi=\xi$.
\end{lem}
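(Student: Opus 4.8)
The plan is to construct the desired extension $\phi$ locally, sphere by sphere, around each point of $Y$. Recall that an extension of $\psi$ is the same datum as a coloring of $\bigcup_{r\in B}S(Y,r)$, and that $B$ consists of odd radii $3,5,\dots,R$. Because $Y$ is $(2R+1)$-separated, the sets $S(y,r)$ for distinct $y\in Y$ and $r\le R$ are pairwise disjoint, so choosing the colors on each $S(y,r)$ can be done completely independently. Thus the construction reduces to a purely local problem: given $y\in Y$ and $r\in B$, and given a target value $\xi_r(y)\in\N$ with $\xi_r(y)\le\sigma_y(r)$, color the sphere $S(y,r)$ so that exactly $\xi_r(y)$ of its $\sigma_y(r)$ vertices receive color $1$.

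First I would make precise the disjointness claim: for $y\ne y'$ in $Y$ and $r,r'\le R$, any vertex in $S(y,r)\cap S(y',r')$ would give $d(y,y')\le r+r'\le 2R<2R+1$, contradicting $(2R+1)$-separatedness; hence $\bigcup_{r\in B}S(Y,r)=\bigsqcup_{y\in Y}\bigsqcup_{r\in B}S(y,r)$ is a disjoint union. Second, for each such $y$ and $r$, since $|S(y,r)|=\sigma_y(r)\ge \xi_r(y)$, I can pick an arbitrary subset $T_{y,r}\subset S(y,r)$ with $|T_{y,r}|=\xi_r(y)$ and set $\phi\equiv 1$ on $T_{y,r}$ and $\phi\equiv 0$ on $S(y,r)\setminus T_{y,r}$. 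Performing this choice simultaneously over all $y\in Y$ and $r\in B$ is legitimate by the disjointness, and together with the already-defined values of $\psi$ on $\dom\psi=X\setminus\bigcup_{r\in B}S(Y,r)$ this yields a well-defined coloring $\phi\colon X\to\{0,1\}$ extending $\psi$. Third, I would verify that $\bar\phi=\xi$: by~\eqref{tildephi}, $\bar\phi_r(y)=|S(y,r)\cap\phi^{-1}(1)|$; for $r\in B$ this equals $|T_{y,r}|=\xi_r(y)$ by construction, which is exactly the required identity (the hypothesis concerns only the coordinates indexed by $B$, matching the fact that $\phi$ on $S(y,r)$ for $r\notin B$ is already pinned down by $\psi$).

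This statement is genuinely easy, so there is no serious obstacle; the only point requiring a moment's care is the disjointness of the spheres $S(y,r)$, which is where the $(2R+1)$-separation hypothesis on $Y$ is used — without it the local choices could conflict. One should also note that $\xi$ is allowed to be any function satisfying the pointwise bound, with no compatibility or measurability condition, precisely because the cells $S(y,r)$ are independent. I would keep the write-up to a few lines, citing the disjointness and then the trivial counting.
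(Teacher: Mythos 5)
Your proof is correct and follows exactly the paper's own argument: use the $(2R+1)$-separation to see that the spheres $S(y,r)$, $y\in Y$, $r\in B$, are pairwise disjoint, then color each sphere independently by choosing $\xi_r(y)$ vertices to receive color $1$. The extra detail you supply (the explicit triangle-inequality check for disjointness and the verification that $\bar\phi=\xi$) is correct and simply makes the paper's one-line proof more explicit.
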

\begin{proof}
	Since $Y$ is $(2R+1)$-separated, the spheres $S(y,r)$, $y\in Y$, $r\in B$, are pairwise disjoint. Thus we can define $\phi$ independently over each sphere $S(y,r)$ by coloring $\xi_r(y)$ vertices with the color $1$ and the rest with the color $0$. 
\end{proof}

\begin{lem}\label{l:auto}
	For each extension $\phi\colon X\to\{0,1\}$ of $\psi$ and every automorphism $f\in\Aut(X,\phi)$, there is a unique automorphism $\bar f\in\Aut(Y,\bar\phi)$ such that $d(\bar f(y),f(y))\leq 1$ for all $y\in Y$.
\end{lem}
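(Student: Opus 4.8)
The plan is to use Lemma~\ref{l:phiyprime} to define $\bar f$ vertex-by-vertex and then check the three things we need: well-definedness (the map on $Y$ lands in $Y$), that it is a graph automorphism of $(Y,E_Y)$, that it preserves $\bar\phi$, and uniqueness. First I would set $\bar f(y):=\bar y$, where $\bar y\in Y$ is the vertex produced by Lemma~\ref{l:phiyprime}: it satisfies $d(\bar y,f(y))\le 1$ and $d(z,\bar y)=d(z,f(y))$ for all $z\notin\{\bar y,f(y)\}$. This immediately gives the required closeness bound $d(\bar f(y),f(y))\le 1$. Since $Y$ is $(2R+1)$-separated and $R\ge 5$, the ball $D_X(f(y),1)$ meets $Y$ in at most one point, so $\bar y$ is the \emph{unique} element of $Y$ within distance $1$ of $f(y)$; applying the same reasoning to $f^{-1}$ (also in $\Aut(X,\phi)$) shows $\bar f$ is a bijection $Y\to Y$ with $\overline{f^{-1}}=(\bar f)^{-1}$.

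Next I would show $\bar f$ respects $E_Y$. The key point is the distance-comparison in Lemma~\ref{l:phiyprime}: for $y,y'\in Y$ with $y\ne y'$, $f(y)\ne f(y')$, and generically $\bar y\ne \bar y'$, $f(y)\ne\bar y'$, etc., we get $d(\bar y,\bar y')$ and $d(f(y),f(y'))$ differ by at most... — more precisely, since $\bar y$ differs from $f(y)$ by a single vertex and likewise $\bar y'$ from $f(y')$, one checks $|d(\bar y,\bar y')-d(f(y),f(y'))|\le 2$, but actually the cleaner route is: $\bar y$ is adjacent to $f(y)$ in $X$, so $d(\bar y,\bar y')\le d(f(y),f(y'))+2$ and $\ge d(f(y),f(y'))-2$, hence (using $d(f(y),f(y'))=d(y,y')$ since $f\in\Aut(X)$) we have $|d(\bar y,\bar y')-d(y,y')|\le 2$. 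Then $yE_Yy'\iff 0<d(y,y')\le 4R+1$; combined with the fact that $Y$ is $(2R+1)$-separated (so distinct elements of $Y$ are at distance $\ge 2R+1$, and the relevant distances cannot lie strictly between $4R+1$ and $6R+2$ without... ) — here I would invoke that the edge threshold $4R+1$ was chosen precisely so that a $\pm 2$ perturbation cannot cross it given the separation gap; this is exactly the mechanism of~\eqref{ey} and Lemma~\ref{l:yconnected}. So $yE_Yy'\iff \bar f(y)E_Y\bar f(y')$, i.e. $\bar f\in\Aut(Y,E_Y)$.

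Then I would verify $\bar f$ preserves $\bar\phi$, i.e. $\bar\phi_r(\bar f(y))=\bar\phi_r(y)$ for all $r\in B$. Since $f$ is an isometry of $X$ preserving $\phi$, it maps $S(y,r)\cap\phi^{-1}(1)$ bijectively onto $S(f(y),r)\cap\phi^{-1}(1)$, so $\bar\phi_r(y)=|S(f(y),r)\cap\phi^{-1}(1)|$. It then suffices to compare spheres around $f(y)$ and around $\bar y$: by the distance equality $d(z,\bar y)=d(z,f(y))$ for all $z\notin\{\bar y,f(y)\}$ (Lemma~\ref{l:phiyprime}), the spheres $S(\bar y,r)$ and $S(f(y),r)$ agree except possibly for the two points $\bar y,f(y)$ themselves. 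For $r\in B$ (so $r\ge 3$, and $r\le R$), both $\bar y$ and $f(y)$ lie in $D(Y,1)$, hence at distance $\le 1$ from $Y$, so neither belongs to any $S(\bar y,r)$ or $S(f(y),r)$ with $r\ge 2$; therefore $S(\bar y,r)=S(f(y),r)$ exactly, and $\bar\phi_r(\bar y)=\bar\phi_r(f(y))=\bar\phi_r(y)$. Uniqueness of $\bar f$ is clear: any automorphism $g$ of $(Y,\bar\phi)$ with $d(g(y),f(y))\le 1$ must have $g(y)\in Y\cap D_X(f(y),1)=\{\bar y\}$.

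The main obstacle I anticipate is the edge-relation step: one has to make the constant bookkeeping around the threshold $4R+1$ versus the separation $2R+1$ genuinely work, i.e. confirm that the $\pm 2$ (or whatever exact perturbation bound one derives from Lemma~\ref{l:phiyprime}) never moves a pair of $Y$-vertices across the adjacency cutoff. Since $\bar y$ is merely $1$-close to $f(y)$ and both $f(y),\bar y$ need not lie in $Y$, the distances $d(\bar y,\bar y')$ are not literally equal to $d(y,y')$, only controlled up to a bounded error; checking that this error is harmless — given that legitimate $Y$-distances come in a "quantized" range dictated by $(2R+1)$-separation and the $4R+1$ edge rule — is the delicate part and the reason $R$ is taken large (and odd). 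Everything else is a routine application of Lemmas~\ref{l:phiyprime} and~\ref{l:yconnected} together with the fact that $f$ is a colour-preserving isometry.
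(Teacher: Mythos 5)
Your overall strategy matches the paper's: define $\bar f(y)=\bar y$ via Lemma~\ref{l:phiyprime}, then check that $\bar f$ is a bijection of $Y$, preserves $E_Y$, preserves $\bar\phi$, and is unique. The uniqueness, bijectivity, and $\bar\phi$-preservation steps are fine (your sphere comparison for $r\ge 3$ is actually slightly more careful than the paper's). But there is a genuine gap in the edge-relation step. You retreat from the exact distance identity of Lemma~\ref{l:phiyprime} to the weaker triangle-inequality bound $|d(\bar y,\bar y')-d(y,y')|\le 2$, and then appeal to a ``quantization'' of $Y$-distances to argue the $\pm 2$ error cannot cross the $4R+1$ threshold. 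That quantization does not exist: the $(2R+1)$-separation only bounds $Y$-distances from below, so nothing prevents, say, $d(y,y')=4R+2$ (so $y\not E_Y y'$) while $d(\bar y,\bar y')=4R$ (so $\bar y E_Y\bar y'$). The $\pm 2$ bound is therefore insufficient, and the ``large $R$, odd'' heuristic does not rescue it.

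The fix is to use the exact equality $d(z,\bar y)=d(z,f(y))$ for $z\notin\{\bar y,f(y)\}$ twice rather than once. For $y\neq y'$ in $Y$, the four points $\bar y,\bar y',f(y),f(y')$ are pairwise distinct (if, e.g., $\bar y=\bar y'$, then $d(f(y),f(y'))\le 2<2R+1$, contradicting separation; the other cases are similar). Then taking $z=\bar y'$ gives $d(\bar y',\bar y)=d(\bar y',f(y))$, and taking $z=f(y)$ in the identity for $y'$ gives $d(f(y),\bar y')=d(f(y),f(y'))$; chaining these yields $d(\bar f(y),\bar f(y'))=d(f(y),f(y'))=d(y,y')$ exactly, from which $E_Y$-preservation is immediate. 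This is precisely what the paper does, and once you have exact distance preservation on $Y$ no constant bookkeeping at the $4R+1$ cutoff is needed at all.
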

\begin{proof}
	Let $\bar f$ be defined by the formula $\bar f(y)=\bar y$, where $\bar y\in Y$ denotes the point given by Lemma~\ref{l:phiyprime}. This point satisfies $d(\bar f(y),z)=d(f(y),z)$ for all $z\in X\setminus \{f(y), \bar f(y)\}$, so 
	\[
	d(y,y')=d(f(y),f(y'))=d(\bar f(y),\bar f(y'))
	\]
	for every $y,y'\in Y$, $y\neq y'$. This equation and~\eqref{ey} yield that $\bar f$ is an automorphism of $Y$; moreover,
	\[f(S(y,r))=S(f(y),r)=S(\bar f(y),r)\]
	 for $r\geq 1$ by Lemma~\ref{l:phiyprime}, so $\bar f$ preserves $\xi$ by~\eqref{tildephi}.
\end{proof}

\begin{prop}\label{p:main}
	If $X$ has symmetric growth, then we can choose $R$ large enough so that $\prod_{r\in B}(\sx(r)+1)> \bx(4R+1)$ for all $x\in X$.
\end{prop}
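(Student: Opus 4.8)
\emph{Overview of the plan.} I would argue separately according to whether $X$ has subexponential or exponential growth, using Lemma~\ref{l:uniformsub} in the first case and Lemma~\ref{l:uniformexp} in the second. In each case the goal is to bound $\prod_{r\in B}(\sigma_x(r)+1)$ from below, uniformly in $x$, by a function of $R$ that eventually outgrows the uniform upper bound available for $\beta_x(4R+1)$, and then to take $R$ large. We may assume $X$ is infinite, so that $\sigma_x(r)\ge 1$ for all $x\in X$ and $r\in\N$; write $\Delta:=\deg X<\infty$, and recall that $|B|=\tfrac{R-1}{2}$.

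\emph{Subexponential case.} Here one only needs the trivial estimate $\prod_{r\in B}(\sigma_x(r)+1)\ge 2^{|B|}=2^{(R-1)/2}$, which is already genuinely exponential in $R$. Picking any $b$ with $0<b<\tfrac{\log 2}{8}$, Lemma~\ref{l:uniformsub} (with $a=1$) gives $\beta_x(4R+1)\le e^{b(4R+1)}$ for every $x$ once $R$ is large, and since $\tfrac{\log 2}{2}>4b$ the left-hand bound exceeds this for all large $R$.

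\emph{Exponential case.} First, $\Delta\ge 3$, since a connected locally finite graph with $\Delta\le 2$ is a path, a ray, a line or a cycle and so has linear or bounded growth. Hence~\eqref{bxr} applies and yields the crude uniform bound $\beta_x(4R+1)\le 3(\Delta-1)^{4R+1}$, whose logarithm is $\le 5R\log(\Delta-1)$ for $R$ large. The key point is that the trivial estimate $2^{|B|}$ no longer suffices — its base may be too small to beat $(\Delta-1)^{4R}$ — so I need a \emph{super}-exponential lower bound, of the form $\exp(cR^2)$ with $c>0$, which I would extract from a ``pyramid'' of spheres. By Lemma~\ref{l:uniformexp} there are $k,l\in\N$ so that $\beta_x(n)\ge\tfrac1ke^{\lambda n}$ for all $x$ and all large $n$, with $\lambda:=1/l$; set $\rho:=\lambda/\log(\Delta-1)\in(0,1]$. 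Fix $x$, take $R$ large, and let $M:=\max_{0\le r\le R}\sigma_x(r)=\sigma_x(r^*)$. From $\beta_x(R)\le(R+1)M$ we get $M\ge e^{\lambda R}/(k(R+1))>1$, so $r^*\ge 1$, and~\eqref{sxr} gives $M\le\Delta(\Delta-1)^{r^*-1}$; writing $L:=\log_{\Delta-1}M$, these estimates give $L\ge\tfrac{\rho}{2}R$ (for $R$ large) and $L\le r^*+1$. Now put $J:=\lfloor L/2\rfloor$. Iterating~\eqref{sxrplus1} backwards from $r^*$ gives $\sigma_x(r^*-i)\ge M/(\Delta-1)^i$ for $0\le i\le J$, and since $(\Delta-1)^i\le(\Delta-1)^{L/2}=M^{1/2}$ there, we get $\sigma_x(r^*-i)\ge\sqrt M$. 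The integers $r^*-J,\dots,r^*$ lie in $[3,R]$ (clearly $r^*\le R$, while $r^*-J\ge\tfrac{r^*-1}{2}\ge 3$ for $R$ large), and at least $J/2$ of them are odd, hence in $B$; discarding all other factors (each $\ge 1$),
\[
\prod_{r\in B}(\sigma_x(r)+1)\ \ge\ \bigl(\sqrt M\bigr)^{J/2}\ =\ M^{J/4}\ \ge\ \left(\frac{e^{\lambda R}}{k(R+1)}\right)^{cR}
\]
for a constant $c=c(\rho)>0$. The logarithm of the right-hand side grows quadratically in $R$, so for $R$ large — with a threshold depending only on $k,l,\Delta$, hence uniform in $x$ — it exceeds $5R\log(\Delta-1)\ge\log\beta_x(4R+1)$, which finishes the argument.

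\emph{Main obstacle.} I expect the delicate step to be the exponential case: recognizing that a merely exponential lower bound for $\prod_{r\in B}(\sigma_x(r)+1)$ will not suffice (it can lose to $\beta_x(4R+1)\le 3(\Delta-1)^{4R+1}$), and locating the super-exponential one in the pyramid of spheres below a largest sphere $S(x,r^*)$ — whose size $M$ is exponentially large in $R$ by exponential growth, and whose predecessors keep size $\ge\sqrt M$ by the one-step bound~\eqref{sxrplus1}. The surrounding bookkeeping — tracking which of the pyramid's radii are odd so they land in $B$, checking that the pyramid fits inside $[3,R]$, and keeping all thresholds independent of $x$ — is routine but must be done with care.
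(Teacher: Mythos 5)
Your proof is correct, and it takes a genuinely different (and in places cleaner) route than the paper.

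The paper does not bound $\prod_{r\in B}(\sigma_x(r)+1)$ directly. Instead it first establishes a lower bound for the larger product $\prod_{r=3}^{R}(\sigma_x(r)+1)$ (Proposition~\ref{p:prodspheres}), deriving this from a delicate combinatorial optimization (Claim~\ref{c:minimum}) over all sequences $(a_1,\dots,a_R)$ satisfying the sphere-growth constraints~\eqref{a1leqdelta}--\eqref{sumai}, which produces a super-exponential bound of the form $(\Delta-1)^{\Theta(I^2)}$ with $I\approx\log_{\Delta-1}\beta_x(R)$; it then passes from $\prod_{r=3}^R$ to $\prod_{r\in B}$ via the comparison $\prod_{r\in A}(\sigma_x(r)+1)\le(\Delta-1)^{|A|}\prod_{r\in B}(\sigma_x(r)+1)$ coming from~\eqref{sxrplus1}. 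Your ``pyramid of spheres'' argument in the exponential case extracts the same super-exponential phenomenon much more directly: locate the largest sphere $\sigma_x(r^*)=M$, observe by the one-step bound~\eqref{sxrplus1} that the $\lfloor\log_{\Delta-1}M/2\rfloor$ spheres immediately preceding $r^*$ all have size $\ge\sqrt M$, and multiply over the odd radii in that window, which land in $B$. This avoids both the optimization claim and the $A$-versus-$B$ bookkeeping. Your subexponential case is also lighter than the paper's: rather than invoking Hypothesis~\ref{h:lowerbound} and Lemma~\ref{l:uniformsubexp}, you just use the trivial $\sigma_x(r)\ge1$ to get $2^{|B|}$, which comfortably beats the subexponential bound from Lemma~\ref{l:uniformsub}; in particular your argument does not even need $\Delta>2$ in that case. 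Two very minor remarks: Lemma~\ref{l:uniformexp} literally gives $\beta_x(n)\ge e^{n/l}/k$ only for $n$ a multiple of $l$, so when you claim it for all large $n$ you should note that monotonicity of $\beta_x$ extends it at the cost of replacing $k$ by $ek$; and you should say explicitly at the end that, since the final inequalities hold for all sufficiently large $R$, one may choose $R$ odd, as required in the construction of $A$ and $B$.
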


In order to keep with the flow of the argument, we defer the proof of Proposition~\ref{p:main} to Section~\ref{s:growth}.
Assume for the remainder of this section that $X$ has symmetric growth and that $R$ has been chosen satisfying the statement of Proposition~\ref{p:main}.

\begin{prop}\label{p:aperiodic}
	There is a distinguishing coloring $\xi:=(\xi_r)_{r\in B}\colon Y\to \prod_B \N$ such that $\xi_r(y)\leq \sigma_y(r)+1$.
\end{prop}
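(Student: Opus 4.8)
The goal is to find a coloring $\xi = (\xi_r)_{r\in B}\colon Y\to\prod_B\N$ with $\xi_r(y)\le\sigma_y(r)+1$ for all $y\in Y$, such that the only automorphism of $(Y,E_Y)$ preserving $\xi$ is the identity. The key asset is Proposition~\ref{p:main}: for $R$ chosen appropriately, the number of available ``local colors'' at each vertex $y$, namely $\prod_{r\in B}(\sigma_y(r)+1)$, strictly exceeds $\beta_y(4R+1)$, which by Lemma~\ref{l:yconnected} is an upper bound for $\deg_Y(y)+1$ — in fact a bound on $|D_Y(y,\text{small radius})|$ in the metric of $(Y,E_Y)$. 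This is precisely the numerical slack that lets one run the standard greedy/transfinite construction of a distinguishing coloring on a connected, locally finite graph in which every vertex admits ``enough'' colors.

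First I would record the structural facts about $(Y,E_Y)$: it is connected (Lemma~\ref{l:yconnected}) and locally finite, with an explicit degree bound $\deg_Y y\le\beta_y(4R+1)-1$; hence it is countable, so enumerate $Y=\{y_1,y_2,\dots\}$. I would also note that the ``palette'' at $y$, $P(y):=\{(\xi_r)_{r\in B}: 0\le\xi_r\le\sigma_y(r)+1\}$, has size $\prod_{r\in B}(\sigma_y(r)+2)\ge\prod_{r\in B}(\sigma_y(r)+1)>\beta_y(4R+1)$ by Proposition~\ref{p:main}; I'd want the count to beat $\beta_y(4R+1)$ — which already bounds the number of vertices within $E_Y$-distance $2$ or so of $y$ after accounting for the growth estimates~\eqref{sxr}, \eqref{bxr} — since the classical argument needs each vertex to have more colors than it has ``nearby competitors'' to be distinguished from. (If the bound $\beta_y(4R+1)$ turns out to be just shy of what is needed, one absorbs the $+1$ slack in $\sigma_y(r)+1$; this is why the proposition is stated with $\sigma_y(r)$ rather than $\sigma_y(r)+1$, giving a safety margin.)

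Then I would carry out the construction itself. The cleanest route is the ``individualization'' trick: color $y_1$ with a value in $P(y_1)$ that no neighbor of $y_1$ can be forced to share, then inductively, having colored $y_1,\dots,y_n$, color $y_{n+1}$ so as to break any partial automorphism — i.e., ensure that for each already-colored $y_i$ with $i\le n$ which is ``$E_Y$-close'' to $y_{n+1}$ and currently indistinguishable from some other colored vertex, the new color distinguishes them. Because each palette $P(y)$ has more elements than $y$ has relevant nearby vertices, at each finite stage there remain legal choices. Finally one argues that any $\bar f\in\Aut(Y,\xi)$ must fix $y_1$ (it was individualized), then fixes all of its neighbors, and by connectedness and induction along the enumeration fixes every vertex, so $\bar f=\id$. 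Alternatively, and perhaps more in the spirit of~\cite{CunoImrichLehner} and~\cite{AlvarezBarral} referenced in the paper, one fixes a root $y_0\in Y$, uses one ``special'' color value at $y_0$ to pin it down, and colors a spanning tree of $(Y,E_Y)$ rooted at $y_0$ level by level, using the extra color at each vertex to record distance-to-root information and to break ties among siblings — again possible because the palette size dominates the branching.

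**Main obstacle.** The real content is already isolated in Proposition~\ref{p:main} (the growth/counting inequality), which is why it is deferred to Section~\ref{s:growth}; granting it, the remaining difficulty is purely bookkeeping — making sure that ``enough colors'' is measured against the correct local quantity. Concretely, the subtle point is that an automorphism of $(Y,E_Y)$ moving $y$ to $y'$ need not move $y$'s sphere-data to $y'$'s in a way controlled by $\deg_Y$ alone; one must check that preserving $\xi$ together with connectedness and the locally finite structure genuinely propagates fixing a point to fixing its neighbors. This is where Lemma~\ref{l:auto} is used downstream (every $\xi$-preserving $\bar f$ on $Y$ lifts a $\phi$-preserving $f$ on $X$), but for the proof of Proposition~\ref{p:aperiodic} in isolation the crux is verifying that $\beta_y(4R+1)$ (equivalently, the $E_Y$-neighborhood size) is the right comparison quantity and that the palette strictly exceeds it with room for the greedy step — a routine but necessary check against~\eqref{sxr} and~\eqref{bxr}.
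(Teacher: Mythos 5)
Your ``alternative'' route --- fix a root $y_0$, give it a special palette value, take a spanning tree of $(Y,E_Y)$, and color so that no non-root vertex shares the root's value and siblings get distinct values --- is exactly the paper's proof (which cites a lemma of Collins--Trenk for the fact that such a tree-coloring is distinguishing). The enabling count is also the one you identify: $\prod_{r\in B}(\sigma_y(r)+1)>\beta_x(4R+1)$ from Proposition~\ref{p:main}, and $\deg_Y y\le\beta_y(4R+1)-1$ from Lemma~\ref{l:yconnected}, so each vertex has strictly more available tuples $(\xi_r)_{r\in B}$ with $0\le\xi_r\le\sigma_y(r)$ than it has siblings plus the one forbidden root value.

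Two points to tighten. First, your ``primary'' greedy/individualization route is not yet a proof: ``color $y_{n+1}$ so as to break any partial automorphism'' is not a condition you can verify locally from the finitely many already-colored vertices, and in an infinite graph it is not clear the greedy choices converge to a genuinely distinguishing coloring; the spanning-tree argument exists precisely to replace this vague step with a concrete induction (fix the root, then fix each level by the siblings-distinct property). Second, the ``$+1$ slack'' remark is off: the coloring produced must satisfy $\xi_r(y)\le\sigma_y(r)$, not $\le\sigma_y(r)+1$, because Lemma~\ref{l:tildephiexists} requires $\xi_r(y)\le\sigma_y(r)$ to realize $\xi$ as $\bar\phi$ for some $2$-coloring $\phi$ of $X$; the ``$+1$'' in the proposition's statement is just a loose upper bound and cannot be exploited. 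Relatedly, you don't need to bound a distance-$2$ neighborhood in $(Y,E_Y)$ or to ``record distance-to-root information'' --- only the sibling count matters, and that is controlled directly by $\deg_Y$.
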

\begin{proof}
	Choose a spanning tree $T$ for $(Y,E_Y)$ and a root $y_0\in Y$. In order to define $\xi$, first let $\xi(y_0)=(0,\ldots,0)$. Every $y\in Y$ with $y\neq y_0$ has at most $|D_X(y,4R+1)|-1$ siblings in $T$ by Lemma~\ref{l:yconnected}. Using Proposition~\ref{p:main}, we can define $\xi$ so that $\xi(y)\neq (0,\ldots,0)$ for all $y\neq y_0$, and every vertex is colored differently from its siblings in $T$. It can be easily checked that such a coloring is distinguishing~\cite[Lem.~4.1]{CollinsTrenk}.
\end{proof}

\begin{proof}[Proof of Theorem~{\normalfont\ref{t:main}}]
	Lemma~\ref{l:tildephiexists} and Proposition~\ref{p:aperiodic} prove the existence of some  $\phi\colon X\to\{0,1\}$ extending $\psi$ and such that $\bar\phi\colon Y\to\N$ is distinguishing. By Lemma~\ref{l:auto}, every $f\in\Aut(X,\phi)$ satisfies $d(f(y),y)\leq 1$ for all $y\in Y$. Since $Y$ is $2R$-coarsely dense, the triangle inequality yields $d(x,f(x))\leq 4R+1$ for all $x\in X$.	
\end{proof} 

\begin{proof}[Proof of Theorem~{\normalfont\ref{t:motion}}]
	Let $X$ have infinite motion and pick $x\in X$ so that $S_x$ has infinite geometric motion; Lemma~\ref{l:delone} ensures that we can choose $Y$ so that $x\in Y$.  Using Lemma~\ref{l:tildephiexists} and Proposition~\ref{p:aperiodic}, we construct a coloring $\phi\colon X\to\{0,1\}$ extending $\psi$ and such that $\bar\phi$ is distinguishing. Since $X$ has infinite motion, Corollary~\ref{c:motion} yields  $f(Y)=Y$  for every $f\in \Aut(X,\psi)$. Moreover, Lemma~\ref{l:auto} and the fact that $\bar{\phi}$ is distinguishing show that $f|_Y=\id_Y$, so  $\Aut(X,\phi)\subset S_x$. Since $\gm(S_x)=\infty$ by hypothesis,  $\gm(\Aut(X,\phi))=\infty$. But $Y$ is a $2R$-coarsely dense subset and is fixed pointwise by every automorphism $f$, so the triangle inequality yields $d(x,f(x))\leq 4R$ for all $x\in X$, a contradiction.
\end{proof}

\section{Growth estimates}\label{s:growth}

In this section we assume that $X$ is a graph with symmetric growth. The results in~\cite{Lehner} and~\cite{LehnerPilsniakStawiski} proved the existence of distinguishing $2$-colorings when either $\deg X\leq 5$, or $\beta_x=\mathcal{O}(2^{(1-\epsilon)\sqrt{n}/2})$. 
Since a distinguishing coloring is already coarsely distinguishing,  we  assume  $\Delta:=\deg X>2$ and that $X$ satisfies the following growth condition, which is weaker than having symmetric growth with $\bx(r)\neq\mathcal{O}(2^{(1-\epsilon)\sqrt{r}/2})$.

\begin{hyp}\label{h:lowerbound}
	There is an increasing sequence $(r_n)_{n\in\N}$ such that $\bx(r_n)\geq 2^{\sqrt{r_n}/4}$ for all $n\in\N$ and $x\in X$.
\end{hyp}

\begin{lem}\label{l:uniformsubexp}
	For every $\epsilon>0$, there is $R\in\N$ such that $\epsilon\bx(r)>r$ for all $r\geq R$ and $x\in X$.
\end{lem}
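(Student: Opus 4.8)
The plan is to deduce the estimate from the uniform lower bound supplied by Hypothesis~\ref{h:lowerbound}, together with the monotonicity of $\bx$ and a tiling trick that bridges the gaps between consecutive terms of the sequence $(r_n)$. Fix $\epsilon>0$; I will exhibit an $R\in\N$ depending only on $\epsilon$ (and on $\Delta$ and the sequence $(r_n)$), with no dependence on $x$, since every ingredient below is uniform in $x$.

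First I would collect the elementary facts. Because $X$ is connected, locally finite and infinite, each vertex $x$ is the origin of a geodesic ray $x=x_0,x_1,x_2,\dots$ (König's lemma), and $\bx$ is strictly increasing, so $\bx(r)\ge r+1$ for all $r$. Since $2^{\sqrt t/4}/t\to\infty$ as $t\to\infty$, there is an index $N_0$ with $\epsilon\,2^{\sqrt{r_n}/4}>8r_n$ for all $n\ge N_0$, and there is $r_1\in\N$ with $\epsilon\,2^{\sqrt r/12}>r$ for all $r\ge r_1$. Set $R:=\max\{8r_{N_0},\,r_1\}$.

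Now fix $r\ge R$ and $x\in X$, and let $n$ be maximal with $r_n\le r$; as $r\ge 8r_{N_0}>r_{N_0}$ this forces $n\ge N_0$, so $s:=r_n\ge r_{N_0}$. If $r<8s$, then monotonicity gives $\bx(r)\ge\bx(s)\ge 2^{\sqrt s/4}\ge 2^{\sqrt{r/8}/4}\ge 2^{\sqrt r/12}>r/\epsilon$ by the choice of $r_1$ (here $s\ge r/8$ is used). If instead $r\ge 8s$, I tile: along the geodesic ray from $x$ take the vertices $p_i:=x_{i(2s+1)}$ for $0\le i\le M$, where $M:=\lfloor (r-s)/(2s+1)\rfloor$, so that $M+1\ge \tfrac{r}{8s}$ (using $r\ge 8s$); each $p_i$ lies in $D(x,r-s)$, hence $D(p_i,s)\subseteq D(x,r)$, and the $p_i$ being $(2s+1)$-separated the disks $D(p_i,s)$ are pairwise disjoint. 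Applying Hypothesis~\ref{h:lowerbound} \emph{at each vertex $p_i$} yields $|D(p_i,s)|\ge 2^{\sqrt s/4}$, so
\[
\bx(r)\ \ge\ (M+1)\,2^{\sqrt s/4}\ \ge\ \tfrac{r}{8s}\,2^{\sqrt s/4}\ >\ \tfrac{r}{\epsilon},
\]
the last inequality by the choice of $N_0$ since $s\ge r_{N_0}$. In either case $\epsilon\,\bx(r)>r$, which proves the lemma.

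I expect the \emph{far} regime $r\ge 8 r_n$ to be the only genuine difficulty: there $r$ may sit in a very long gap $(r_n,r_{n+1})$ about which Hypothesis~\ref{h:lowerbound} gives no direct information. The key observation that dissolves this is that the hypothesis holds simultaneously at every vertex, so one can pack roughly $r/r_n$ pairwise disjoint balls inside $D(x,r)$, each containing at least $2^{\sqrt{r_n}/4}\ge 2$ vertices; multiplying these two large quantities beats $r/\epsilon$ once $r_n$ is large. Everything else is bookkeeping with the constants, and the uniformity in $x$ is automatic because both the hypothesis and the existence of a geodesic ray from $x$ are uniform.
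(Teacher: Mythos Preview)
Your proof is correct and takes essentially the same route as the paper: pack pairwise disjoint balls of radius $r_n$ along a geodesic inside $D(x,r)$, invoke Hypothesis~\ref{h:lowerbound} at each center, and count. The only difference is bookkeeping: you let $s=r_n$ vary with $r$ (taking the largest $r_n\le r$), which forces the case split $r<8s$ versus $r\ge 8s$; the paper instead fixes a single sufficiently large $r_n$ once and for all and runs the tiling estimate for every $r$ beyond a threshold, so no case distinction is needed.
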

\begin{proof}
	Let $x\in X$, let $\epsilon>0$, and choose $n\in\N$ such that $3r_n<\epsilon^22^{\sqrt{r_n}}$.
	Let $r\geq 3r_n+1$, and let $m=\lfloor \frac{r-r_n}{2r_n+1}\rfloor$.
	Choose points $x_1,\ldots,x_m$ such that $d(x,x_i)=i(2r_n+1)$ for all $i=1,\ldots,m$, so that the disjoint union $\bigcup_{i=1}^{m}D(x_i,r_n)$ is contained in $D(x,r)$. Now Hypothesis~\ref{h:lowerbound} yields
	\[
	\epsilon^2\bx(r)\geq \epsilon^2\sum_{i=1}^{m}\beta_{x_i}(r_n)\geq \epsilon^2 m2^{\sqrt{r_n}}> m3r_n=\left\lfloor \frac{r-r_n}{2r_n+1}\right\rfloor3r_n\geq \left(\frac{r-r_n}{2r_n+1}-1\right)3r_n \geq r -3r_n-2r_n^2.
	\]
	So, for $r\geq R:= \lceil \frac{2r_n^2+3r_n}{1-\epsilon}\rceil$, we have $r-2r_n^2-3r_n\geq \epsilon r$, and thus
	\[
	\epsilon^2\bx(r)\geq r -2r_n^2-3r_n\geq \epsilon r.\qedhere
	\]
\end{proof}

\begin{prop}\label{p:prodspheres}
	For $R$ large enough, we have  $\prod_{r=3}^R(\sx(r)+1)>(\Delta-1)[\bx(4R+1)+1]^2$ for all $x\in X$.
\end{prop}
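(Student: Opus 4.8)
The plan is to distinguish two cases according to whether $X$ has exponential or subexponential growth, and in each case to bound the left-hand product below by a quantity that dominates $(\Delta-1)[\bx(4R+1)+1]^2$. I would first reduce $\prod_{r=3}^R(\sx(r)+1)$ to something manageable. The trivial bound $\sx(r)+1\ge 2$ is far too weak; instead the useful observation is that $\bx(r+1)\le \bx(r)+\sx(r+1)$, so $\sx(r)+1\ge \bx(r)-\bx(r-1)+1$, but more to the point $\prod_{r=3}^R(\sx(r)+1)\ge \prod_{r=3}^R \sx(r)$ whenever all the $\sx(r)\ge 1$ (which holds for $r\le R$ by connectedness and the fact that $X$ is infinite — here I would invoke that symmetric growth with $\Delta>2$ forces $X$ infinite, or handle finite diameter separately). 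Since $\bx(R)=1+\sum_{r=1}^R\sx(r)$ and $\sx(1)\ge 1$, $\sx(2)\ge 1$, we get $\prod_{r=3}^R\sx(r)$ is at least, by AM–GM applied to the $R-2$ numbers $\sx(3),\dots,\sx(R)$ summing to $\bx(R)-1-\sx(1)-\sx(2)\ge \bx(R)-1-2\Delta$, a lower bound of the form $\bigl(\frac{\bx(R)-2\Delta-1}{R-2}\bigr)^{R-2}$. This is the key inequality.

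So the heart of the matter is to show that for $R$ large, $\bigl(\frac{\bx(R)-2\Delta-1}{R-2}\bigr)^{R-2} > (\Delta-1)[\bx(4R+1)+1]^2$. Taking logarithms, I need $(R-2)\log\!\bigl(\frac{\bx(R)}{R}\cdot(1-o(1))\bigr) > 2\log \bx(4R+1) + \log(\Delta-1) + O(1)$. In the exponential-growth case, Lemma~\ref{l:uniformexp} gives $e^r\le k\bx(lr)$ uniformly, equivalently $\bx(R)\ge e^{R/l}/k$, so $\log\bx(R)\gtrsim R/l$, while by~\eqref{bxr} $\bx(4R+1)\le 3(\Delta-1)^{4R+1}$, so $\log\bx(4R+1)\le CR$ for a constant $C$ depending only on $\Delta$. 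The left side then grows like $R\cdot(R/l)=R^2/l$ and the right side like $CR$, so the left side wins for $R$ large — and crucially the constants $k,l,C$ are uniform over $x\in X$ by symmetric growth, so a single threshold $R$ works for all $x$. In the subexponential-growth case I would use Lemma~\ref{l:uniformsub}: for any $b>0$ there is $m$ with $\bx(4R+1)\le a e^{b(4R+1)}$ uniformly, so $\log\bx(4R+1)\le 4bR+O(1)$ can be made arbitrarily small compared to $R$; meanwhile Lemma~\ref{l:uniformsubexp} gives $\bx(R)\ge R/\epsilon$ uniformly, hence $\log\bigl(\frac{\bx(R)}{R}\bigr)\ge \log(1/\epsilon)$ can be taken arbitrarily large. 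Then $(R-2)\log(1/\epsilon)$ beats $8bR+O(1)$ once $\log(1/\epsilon)>8b+1$, say, again with a uniform threshold. Hypothesis~\ref{h:lowerbound} is what makes Lemma~\ref{l:uniformsubexp} available, so this step rests on it.

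The main obstacle I anticipate is bookkeeping the uniformity of all constants simultaneously — the AM–GM step degrades $\bx(R)$ by the additive terms $2\Delta+1$ and the division by $R-2$, and one must check these are absorbed before taking logs, which requires $\bx(R)$ to dominate $R$ (exactly Lemma~\ref{l:uniformsubexp}) and to dominate $2\Delta+1$ (automatic for $R$ large since $\Delta$ is fixed). A secondary subtlety is the edge case where $X$ has finite diameter $d_0$: then for $R>d_0$ the spheres $\sx(r)$ vanish and the product telescopes trivially, but a finite connected graph with $\deg>2$ is finite, hence has a distinguishing $2$-coloring anyway by the Motion Lemma considerations, or one simply notes symmetric growth of a genuinely infinite graph; I would dispose of this in one sentence at the start. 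Once the two growth cases are set up, each is a routine comparison of a quadratic-in-$R$ (or linear-with-large-slope-in-$R$) lower bound against a linear-in-$R$ upper bound, and I would not belabor the arithmetic.
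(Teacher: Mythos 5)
Your argument has a fatal flaw at what you identify as ``the key inequality.'' The AM--GM inequality states that the geometric mean is at most the arithmetic mean, so given that the $R-2$ positive numbers $\sigma_x(3),\dots,\sigma_x(R)$ sum to $S$, it yields
\[
\prod_{r=3}^R\sigma_x(r)\ \le\ \Bigl(\tfrac{S}{R-2}\Bigr)^{R-2},
\]
an \emph{upper} bound on the product, not the lower bound you assert. The lower bound you write down is simply false: with the sum fixed, the product is minimized (not maximized) by concentrating all the mass on one index, and then $\prod\sigma_x(r)\approx S$, linear rather than exponential in $S$. Indeed, you can check this on the $3$-regular tree: there $\sigma_x(r)=3\cdot 2^{r-1}$, the actual product $\prod_{r=3}^R\sigma_x(r)$ is of order $2^{R^2/2}$, while your claimed ``lower bound'' $\bigl(\beta_x(R)/R\bigr)^{R-2}$ is of order $2^{R^2}$ --- strictly larger, as it must be since it is actually an upper bound. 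So the inequality you build the rest of the argument on points the wrong way.

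This is precisely the difficulty that the paper's proof is designed to handle, and it is why the argument is not a one-liner. Knowing only the sum $\sum\sigma_x(r)=\beta_x(R)-1$ gives no useful lower bound on the product; the proof has to exploit the growth constraint $\sigma_x(r)\le(\Delta-1)\sigma_x(r-1)$ (inequality~\eqref{sxrplus1}), which forbids the sphere sizes from jumping and forces the mass to spread out geometrically. The paper sets up an explicit constrained minimization (Claim~\ref{c:minimum}) to show that, under constraints (C1)--(C3), the minimizing sequence is forced to rise geometrically through an initial segment of length $I$ with $(\Delta-1)^I\gtrsim\beta_x(R)$, which then gives $\prod(\sigma_x(r)+1)\gtrsim[C\beta_x(R)]^{(\log_{\Delta-1}C\beta_x(R))/2}$. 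Your later bookkeeping (splitting into exponential and subexponential growth, invoking Lemmas~\ref{l:uniformexp},~\ref{l:uniformsub},~\ref{l:uniformsubexp}, bounding $\beta_x(4R+1)$ by~\eqref{bxr}) is in the same spirit as the paper's, but it rests on a lower bound you have not established, so the proof does not go through as written.
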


\begin{proof}
	In order to obtain lower bounds for the function $\prod_{r=3}^R(\sx(r)+1)$, let us consider the following optimization problem: given $\Delta,Q,R\in \N$  with  
	\begin{equation}\label{b}
	\Delta>2,\qquad R>3,\qquad Q>\Delta^2+R-1,
	\end{equation}
	minimize the function 
	\begin{equation}\label{f}
	f(a_1,\ldots,a_R) = \prod_{i=3}^R(a_i+1)
	\end{equation}
	for $a=(a_1,\ldots,a_R)\in (\mathbb{Z}^+)^R$ satisfying
	\begin{align}
	\label{a1leqdelta} a_1&\leq \Delta, \tag{C1}\\ \label{aileqai-1delta} a_i&\leq a_{i-1}(\Delta-1),\tag{C2}\\  \label{sumai}\sum_{i=1}^R a_i &= Q-1\tag{C3}
	\end{align}
	for $i=1,\ldots,R$.
	
	\begin{claim}\label{c:minimum}
		The above problem has a minimum $(a_1,\ldots,a_R)$ satisfying:
		\begin{enumerate}[(i)]
			\item \label{i:L}$a_1=\Delta$, and $a_2=\Delta(\Delta-1)$.
			\item \label{i:I}There is  $0\leq I\leq R-2$ such that the sequence $a_{2},\ldots, a_{2+I}$ is  increasing and  $a_i<\Delta(\Delta-1)$ for $i> 2+I$.
			\item \label{i:LplusI}For $3\leq i\leq 2+I$, we have $a_{i}+1>(a_{i-1}-1)(\Delta-1)$.
		\end{enumerate}
	\end{claim}
	Suppose that $(a_1,\ldots,a_R)$ is a minimum that does not satisfy~(\ref{i:L}),  let $n\in\{1,2\}$ be the first index such that $a_n<\Delta(\Delta-1)^{n-1}$, and let $m\geq 3$ be such that $a_m=\max\{a_i\mid i\geq 3\}$. Conditions~\eqref{a1leqdelta} and~\eqref{aileqai-1delta} yield
	\begin{equation}\label{a1a2}
	a_1+a_2\leq \Delta + \Delta(\Delta-1) =\Delta^2.
	\end{equation}
	If $a_i=1$ for all $i\geq 3$, then
	\[
	\sum_{i=1}^Ra_i=a_1+a_2+\sum_{i=3}^Ra_i\leq\Delta^2+R-2< Q-1
	\]
	by~\eqref{b}, contradicting~\eqref{sumai}; this shows that $a_m>1$.
	The sequence $(a'_1,\ldots,a'_R)$ given by 
	\[
	a'_i=\begin{cases}
	a_i+1 &\text{for}\ i=n,\\
	a_i-1 &\text{for}\ i=m,\\
	a_i &\text{otherwise.}
	\end{cases}
	\]
	still satifies~\eqref{a1leqdelta}--\eqref{sumai}, and clearly  $f(a'_1,\ldots,a'_R)<f(a_1,\ldots,a_R)$ since the index $n$ does not appear in~\eqref{f}. It follows that every minimum has to satisfy~(\ref{i:L}).
	
	Let us prove that we can obtain a minimum satisfying both~(\ref{i:L}) and~(\ref{i:I}). Let $(a_1,\ldots,a_R)$ be a minimum, and let $s$ be a permutation of $\{1,\ldots, R\}$ so that $s(1)=1$, $s(2)=2$, and 
	\[
	(a'_1,\ldots,a'_R)=(a_{s(1)},\ldots,a_{s(R)})
	\]
	satisfies~(\ref{i:I}); it is obvious that such a permutation always exists. Since $s$ leaves the subset $\{3,\ldots,R\}$ invariant and the function $f$ is symmetric in those indices, $(a'_1,\ldots,a'_R)$ is also a minimum if it  satisfies~\eqref{a1leqdelta}--\eqref{sumai}. 
	
	Let us prove that $(a'_1,\ldots,a'_R)$ satisfies~\eqref{a1leqdelta}--\eqref{sumai}: Condition~\eqref{a1leqdelta} holds because $s(1)=1$. In order to prove~\eqref{aileqai-1delta}, we begin by showing the following claim.
	\begin{claim}\label{c:ptwo}
		For every $i\in\{3,\ldots,R\}$ with $a_i> a_2$, there is some $j\in \{2,\ldots R\}$ such that $j\neq i$ and $a_2\leq a_{j}< a_i\leq (\Delta-1)a_{j}$.
	\end{claim}
	Let $l$ be an integer to be determined later, we are going to define a sequence of indices $m_1,\ldots,m_l$ in $\{2,\ldots,R\}$. Let
	\[
	m_1=\inf\{\,i\in \{2,\ldots,R\}\mid a_i\geq a_j\ \text{for all}\ 2\leq j\leq R\,\},
	\]
	and assume $a_{m_1}>a_2$, since otherwise the claim is vacuously true.
	Suppose now that, for $i>1$, we have defined $m_j$ for $1\leq j<i$. If $a_{m_{i-1}}=a_2$, then let $l=i-1$, so that $m_{i-1}$ is the last element in the sequence. If $a_{m_{i-1}}>a_2$, then let
	\[
	m_i=\inf\{\,i\in\{2,\ldots,m_{i-1}\}\mid a_i\geq a_j\ \text{for all}\ 2\leq j\leq m_{i-1}\,\}.
	\]
	It follows easily from the definition that $a_{m_i-1}<a_{m_i}$ for all $1\leq i<l$, and thus~\eqref{aileqai-1delta} yields
	\[
	(\Delta-1)^{-1}a_{m_i}\leq a_{m_i-1}<a_{m_i}.
	\]
	Now the claim follows from the trivial observation that, for every $i\in \{3,\ldots,R\}$ such that $a_2<a_i$, there is some $j\in\{1,\ldots,l-1\}$ such that $a_{m_{j+1}}\leq a_i\leq a_{m_j}$.
	 
	We resume the proof of~\eqref{aileqai-1delta}, so let $I$ be the largest non-negative integer so that $a'_2,\ldots a'_{2+I}$ is increasing. Recall that $a'_2= a_2$, and let $3\leq i\leq 2+I$. If $a'_i=a'_2$, then $a'_{i-1}=a'_2=a'_i$, so~\eqref{aileqai-1delta} is satisfied. If $a'_i>a'_2$, then by Claim~\ref{c:ptwo} there is some $j\in \{2,\ldots,R\}$ such that $a_2\leq a_j<a_{s(i)}\leq(\Delta-1)a_j$. Since $a_j>a_2$, we have $2\leq s^{-1}(j)\leq 2+I$ by~(\ref{i:I}). Also, the sequence $a'_2,\ldots,a'_{2+I}$ is increasing, so $a_j\leq a'_{i-1}$ and therefore $a'_{i}\leq(\Delta-1)a'_{i-1}$.
	Thus Condition~\eqref{sumai} is satisfied because the sum $\sum_{i=1}^Ra_i$ is invariant by permutations, and  we have obtained a minimum $(a'_1,\ldots,a'_R)$ that satisfies~(\ref{i:L}) and~(\ref{i:I}).
	
	Finally, suppose that $(a_1,\ldots,a_R)$ is a minimum satisfying~(\ref{i:L}) and~(\ref{i:I}), but not~(\ref{i:LplusI}). Let $n$ be an index such that $3\leq n\leq R-1$ and $a_{n}+1\leq (a_{n-1}-1)(\Delta-1)$, then one can easily  check that the solution $(a'_1,\ldots,a'_R)$ given by
	\[
	a'_i=\begin{cases}
	a_i-1 &\text{for}\ i=n-1,\\
	a_i+1 &\text{for}\ i=n,\\
	a_i &\text{otherwise.}
	\end{cases}
	\]
	still satifies~\eqref{a1leqdelta}--\eqref{sumai}. Furthermore, $a_{n+1}\geq a_n$ implies $(a_{n+1}+1)(a_n-1)<a_{n+1}a_n$, so $f(a'_1,\ldots,a'_R)<f(a_1,\ldots,a_R)$, contradicting the assumption that $(a_1,\ldots,a_R)$ was a minimum. This completes the proof of Claim~\ref{c:minimum}. 
	
	Our use of Claim~\ref{c:minimum} is that, for the purpose of obtaining lower bounds for the function $\prod_{r=3}^R(\sx(r)+1)$, we will assume that the sequence $\sx(r)$ satisfies Claim~\ref{c:minimum}(\ref{i:L})--(\ref{i:LplusI}). Thus~\eqref{a1a2} and Claim~\ref{c:minimum}(\ref{i:I}) yield 
	\begin{align}
	\sum_{r=3}^{2+I}\sx(r)&= \sum_{r=1}^{R}\sx(r)-\sum_{r=3+I}^{R}\sx(r)-\sum_{r=1}^{2}\sx(r)\notag\\
	&\geq \bx(R)-(R-2-I)\Delta(\Delta-1) - \Delta^{2}\notag\\
	&\geq \bx(R)-R\Delta(\Delta-1) - (\Delta-1)^2. \label{bxr-r}
	\end{align}
	
	By~\eqref{sxrplus1}, we have $\sx(2+r)\leq \sx(2)(\Delta-1)^{r}$ for $r=1,\ldots,I$, so 
	\begin{equation}\label{sx8delta-1}
	\sum_{r=1}^{I} \sx(2)(\Delta-1)^r =\sx(2)(\Delta-1)\frac{(\Delta-1)^{I}-1}{\Delta-2}\geq\sx(2)(\Delta-1)\frac{(\Delta-1)^{I}}{\Delta-1}=\Delta^2(\Delta-1)^{I} \geq\sum_{r=3}^{2+I} \sx(r).
	\end{equation}
	By Lemma~\ref{l:uniformsubexp} and~\eqref{bxr-r}, we have
	\begin{equation}\label{sum87plusi}
	\sum_{r=3}^{2+I}\sx(r)\geq \bx(R)/2
	\end{equation}
	for $R$ large enough and  all $x\in X$, and now~\eqref{sx8delta-1} and~\eqref{sum87plusi} yield 
	\begin{equation}\label{ibeta}
	(\Delta-1)^{I}\geq\bx(R)/2\Delta^2.
	\end{equation}
	
	From Claim~\ref{c:minimum}(\ref{i:LplusI}) we obtain by induction the following inequality  for $r=1,\ldots,I$.
	\begin{align*}
	\sx(2+r)&\geq \sx(2)(\Delta-1)^{r} -1-2\sum_{i=1}^{r-1}(\Delta-1)^i\\
			&\geq \sx(2)(\Delta-1)^{r} -1-2(\Delta-1)\frac{(\Delta-1)^{r-1}-1}{\Delta-2}\\
			&\geq (\Delta-1)^{r}(\sx(2)-\frac{2}{\Delta-2})-1.
	\end{align*}
	Since $\sx(2)=\Delta(\Delta-1)>2/(\Delta-2)+1$, we have
	\[\sx(2+r)\geq (\Delta-1)^{r}.\]
	Letting $C=1/2\Delta^2$,~\eqref{ibeta} yields
	\begin{multline}\label{bxlogbx}
	\prod_{r=3}^R (\sx(r)+1)\geq 	\prod_{r=3}^{2+I} (\sx(r)+1)\geq  \prod_{r=1}^{I}(\Delta-1)^{r}= ((\Delta-1)^{I+1})^{I/2}\geq[C\bx(R)]^{(\log_{\Delta-1}C\bx(R))/2}.
	\end{multline}
	We will split the last step on the proof in two cases depending on the growth type of $X$.
	
	\underline{Case 1: $X$ has symmetric exponential growth.} By Lemma~\ref{l:uniformexp}, there are $k,l,m\in\N$ such that $k\bx(ln)\geq e^n$ for all $x\in X$ and $n\geq m$. So, if $R\geq lm$, then~\eqref{bxlogbx} yields
	\[
	\prod_{r=3}^R (\sx(r)+1)\geq (Ck^{-1}e^{\lfloor R/l\rfloor})^{(\lfloor R/l\rfloor+\log Ck^{-1})/2}.
	\]
	Since $(Ck^{-1}e^{\lfloor R/l\rfloor})^{(\lfloor R/l\rfloor+\log Ck^{-1})/2}$ grows faster than $\Delta^{8R+7}$, we can assume that $R$ is large enough so that 
	\[
	\prod_{r=3}^R (\sx(r)+1)>\Delta^{8R+7}
	\]
	for all $x\in X$. Noting that $(\Delta-1)^2>3$, equation~\eqref{bxr} yields 
	\[
	\prod_{r=3}^R(\sx(r)+1)> \Delta[(\Delta-1)^{4R+3}]^{2}\geq (\Delta-1)[\bx(4R+1)]^2.
	\]

	\underline{Case 2: $X$ has symmetric sub-exponential growth.}
	By Hypothesis~\ref{h:lowerbound} and~\eqref{bxlogbx}, there is an increasing sequence $(r_n)$ such that
	\[
	\prod_{r=3}^{r_n} (\sx(r)+1)\geq (C2^{\sqrt{r_n}/4})^{(\log_{\Delta-1}C)/2 +(\sqrt{r_n}\log_{\Delta-1}2)/8}
	\]
	for all $x\in X$ and $n\in\N$.
	Letting $C'=\log_2 C$, we obtain 
	\[
	\prod_{r=3}^{r_n} (\sx(r)+1)\geq 2^{(C'\sqrt{r_n}\log_{\Delta-1}C)/8 +(C'r_n\log_{\Delta-1}2)/32}.
	\]
	By Lemma~\ref{l:uniformsub}, we also have
	\[
	\bx(4r_n+1)\leq 2^{(C'r_n\log_{\Delta-1}2)/128}
	\]
	for all $x\in X$ and $n$ large enough, so
	\[
	[\bx(4r_n+1)]^2\leq 2^{(C'r_n\log_{\Delta-1}2)/64}
	\]
	and thus
	\[
	\frac{\prod_{r=3}^{r_n} (\sx(r)+1)}{[\bx(4r_n+1)]^2}\geq 2^{(C'\sqrt{r_n}\log_{\Delta-1}C)/8+(C'r_n\log_{\Delta-1}2)/64}.
	\]
	The right-hand side of this equation goes to infinity as $n$ does, so 
	\begin{equation}\label{objective}
	\prod_{r=3}^{r_n} (\sx(r)+1)>(\Delta-1)[\bx(4r_n+1)]^2
	\end{equation}
	for $n$ large enough and all $x\in X$. Setting $R=r_n$ for some $n$ satisfying~\eqref{objective} completes the proof.
\end{proof}

\begin{proof}[Proof of Proposition~\ref{p:main}]
	The definitions of $A$ and $B$ in~\eqref{defnab} yield
	\begin{equation}\label{ab}
	\prod_{r=3}^R(\sx(r)+1) = \left[\prod_{r\in A}(\sx(r)+1)\right]\left[\prod_{r\in B}(\sx(r)+1)\right].
	\end{equation}
	We have $r-1\in B$ for every $r\in A$, so
	\begin{equation}\label{prodbproda}
	\prod_{r\in A}(\sx(r)+1)\leq(\Delta-1)\prod_{r\in B}(\sx(r)+1)
	\end{equation}
	because $\sx(r)\leq (\Delta-1)\sx(r-1)$ by~\eqref{sxrplus1}.
	The combination of~\eqref{ab} and~\eqref{prodbproda} then yields
	\[
	\prod_{r\in B}(\sx(r)+1)\geq \sqrt{\frac{\prod_{r=3}^R(\sx(r)+1)}{\Delta-1}},
	\]
	and the result follows from Proposition~\ref{p:prodspheres}.
\end{proof}

\section{Examples}\label{s:examples}

\subsection{A connected, locally finite graph with no coarsely distinguishing $2$-coloring}\label{ss:counter}

For $n\in\mathbb{Z}^+$, let $I_n=\{v_0,\ldots,v_n\}$ be a graph with edges $\{v_m,v_{m+1}\}$ for $m=0,\ldots,n-1$, and let $X=\{u_m\}_{m=1}^\infty$ be a graph with edges $\{u_m,u_{m+1}\}$ for $m\in\mathbb{Z}^+$. For every $n\in\mathbb{Z}^+$, take $2^n+1$ copies of $I_n$ and denote them by 
\[
I_n^i=\{ \, v_m^i\mid i=0,\ldots,n \, \}, \qquad i=1,\ldots,2^n+1.
\]
For every $n$ and $i$, glue the graph $I_n^i$ to $X$ by identifying the points $u_n$ and $v_0^i$; denote the resulting graph by $Y$ (see Figure~\ref{f:counter}), and let $Y_n$ be the full subgraph whose vertex set is the image of $\bigcup_i I_n^i$ by the quotient map. 

\begin{figure}[th]
	\includegraphics[width=0.5\textwidth]{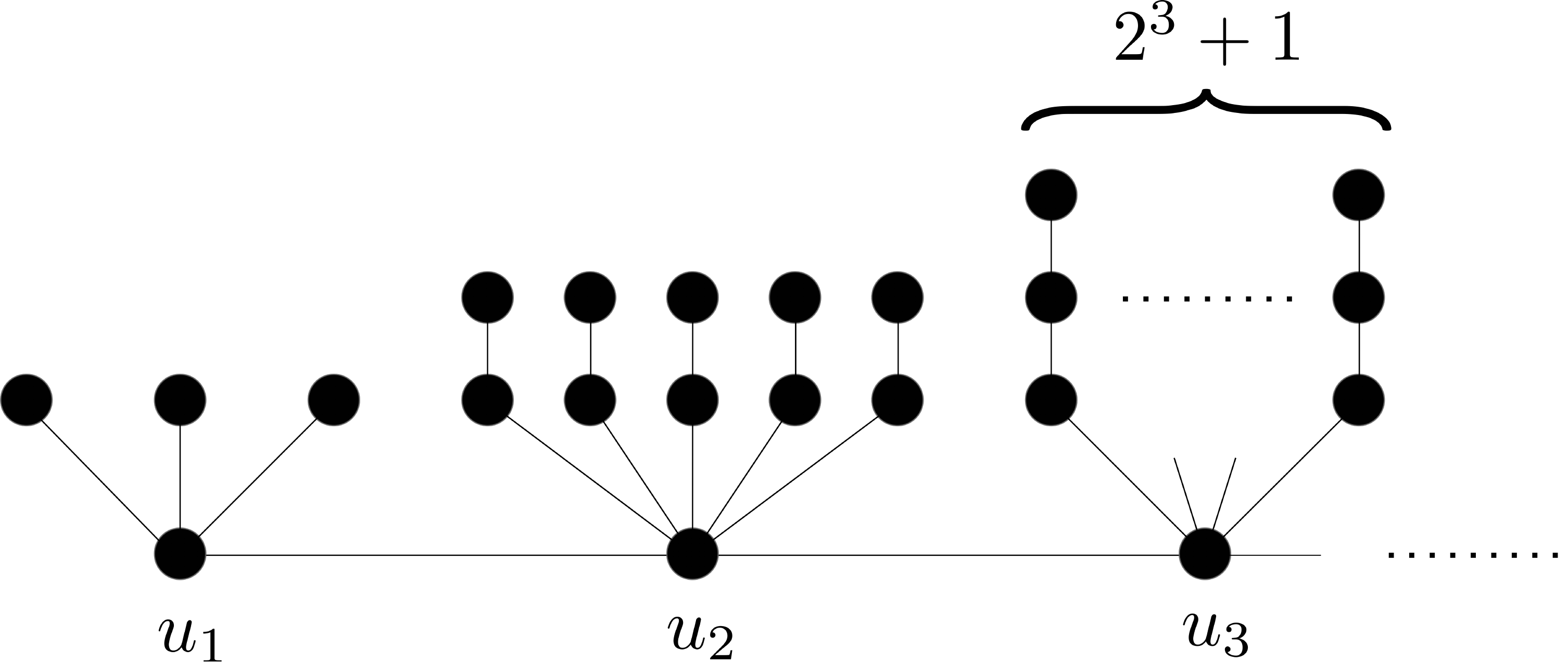}
	\caption{A graph without coarsely distinguishing $2$-colorings}
	
	\label{f:counter}
\end{figure}

Let $\phi$ be an arbitrary $2$-coloring of $Y$. Since we have $2^n+1$ copies of $I_n$ glued to $u_n$ $(n\in\mathbb{Z}^+)$, by the pigeonhole principle there are at least two indices $i(n)\neq j(n)$ such that the restrictions of $\phi$ to $I_n^{i(n)}$ and $I_n^{j(n)}$ are equal. So there exists an isomorphism $f_n$ of $Y_n$ that preserves $\phi$ and maps  $I_n^{i(n)}$ to $I_n^{j(n)}$, and therefore $d(f(v_n^{i(n)}),v_n^{i(n)})=2n$. Choose such an isomorphism $f_n$ for every $n\in \mathbb{Z}^+$, and combine them into an isomorphism $f$ of $Y$ preserving $\phi$. Since $d(f(v_n^{i(n)}),v_n^{i(n)})=2n$ for all $n\in\mathbb{Z}^+$, the map $f$ is not close to the identity. Note that the vertex $u_n$ has degree $4+2^n$, so $\deg Y=\infty$ and hence $Y$ does not have symmetric growth.

\subsection{Graphs with infinite motion but finite geometric motion}\label{ss:infmotion}

Perhaps the simplest example of a connected locally finite graph $X$ with $m(X)=\infty$ and $\gm(X)<\infty$ is shown in Figure~\ref{f:motion}. This graph has symmetric linear growth. The only non-trivial automorphism $f$ is the obvious one interchanging the horizontal rays starting at $y$ and $z$,  and it is easy to check that $d(x,f(x))\leq 1$ for all $x\in X$.

\begin{figure}[tbh]
    \includegraphics{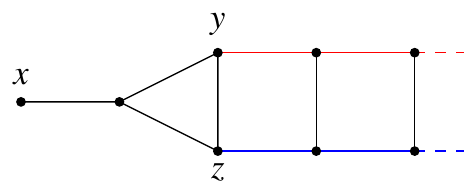}
	\caption{Example of a graph $X$ with $m(X)=\infty$ and $\gm(X)<\infty$}
	\label{f:motion}
\end{figure}

We can modify this example to obtain graphs with infinite motion, finite geometric motion, and larger symmetric growth. For example, let $T_3$ be the regular tree of degree $4$, and let $\phi\colon T_3\to \{0,1\}$ be an distinguishing coloring. Substitute each edge in $T_3$ by a ``gadget" depending on the colors of the incident vertices (see Figure~\ref{f:substitution}). In this way we obtain a graph $Y$ with $\Aut(Y)=\{\id_Y\}$ and symmetric exponential growth. Moreover, we can identify $T_3$ with the subset $\overline Y$ of $Y$ consisting of vertices of degree $4$. Gluing one copy of $X$ to each vertex $y\in \overline Y$ by identifying it with $x$, we obtain a graph with infinite motion, finite geometric motion, and symmetric exponential growth.

\begin{figure}[tbh]
	\includegraphics{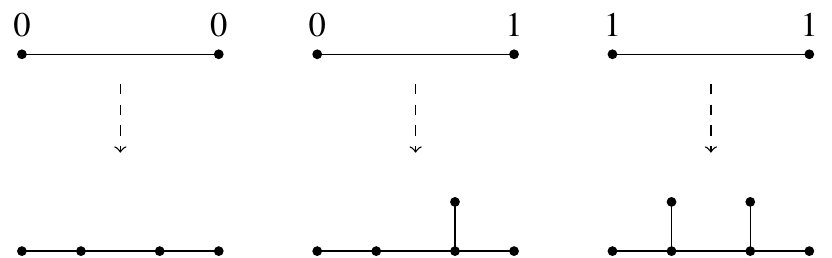}
	\caption{Substituting each edge in $T_4$ by a graph}
	\label{f:substitution}
\end{figure}

\subsection{Diestel-Leader Graphs}\label{ss:dl} The Diestel-Leader graphs $\DL(p_1,\ldots,p_n)$ are defined for $n,p_1,\ldots,p_n\geq 2$. For the sake of simplicity, however, we will restrict our attention to the case $n=2$; at any rate, the following discussion can be easily adapted to include the case $n>2$. In order to define $\DL(p,q)$, let $T_p$ and $T_q$ be the regular trees of degree $p+1$ and $q+1$, respectively. For $i=p,q$, choose a root $o_i\in T_i$ and fix an end $\omega_i$ of $T_i$. These choices induce \emph{height} or \emph{Busemann functions} $\h_i\colon T_i\to \mathbb{Z}$, and then
\[
\DL(p,q):=\{\,(x,y)\in T_p\times T_q\mid \h_p(x)+\h_q(y)=0\,\}.
\]
Let us  $(x,y)\in\DL(p,q)$ as $xy$ for the sake of clarity, and let  $xE_i y$ denote that $x$ and $y$ are adjacent in $T_i$, then the graph structure $E$ in $\DL(p,q)$ is defined by
\[
xyE x'y' \quad \text{if and only if} \quad xE_p x'\; \text{and}\; yE_q y'. 
\]
This  yields
\begin{equation}\label{distancedl}
d_{\DL(p,q)}(xy,x'y')\geq \max\{d_{T_p}(x,x'),d_{T_q}(y,y')\}\geq\max\{|\h(x)-\h(x')|,|\h(y)-\h(y')|\}.
\end{equation}

For $i=p,q$, let $\Aff(T_i)$ be the subgroup of automorphisms of $T_i$ that fix $\omega_i$. For every $f\in\Aff(T_i)$, the quantity $\h(f(x))-\h(x)$ is independent of $x\in T_i$, and we will denote it by $\h(f)$.  
Let 
\[
\mathcal{A}_{p,q}=\{\, (f,f')\in \Aff(T_p)\times\Aff(T_q) \mid \h_p(f)+\h_q(f')=0\,\}.
\]
\begin{lem}[{\cite[Thm.2.7.]{Bartholdi}, \cite[Prop.~3.3]{Bertacchi}}]\label{l:aut}
	If $p\neq q$, then $\Aut(\DL(p,q))\cong\mathcal{A}_{p,q}$. For $p=q$, the group $\Aut(\DL(p,p))$ is generated by $\mathcal{A}_{p,p}$ and the map $\sigma\colon xy\mapsto yx$.
\end{lem}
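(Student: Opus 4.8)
The plan is to follow the standard "boundary + Busemann function" analysis of automorphisms of the horocyclic product, as carried out in the cited references. First I would recall the structure of each factor tree $T_i$: once a root $o_i$ and an end $\omega_i$ are chosen, the Busemann function $\h_i\colon T_i\to\mathbb{Z}$ partitions $T_i$ into horospheres $\h_i^{-1}(n)$, and each vertex $x$ with $\h_i(x)=n$ has exactly one neighbour at height $n-1$ (its "predecessor" towards $\omega_i$) and $p_i$ neighbours at height $n+1$. An automorphism $g$ of $\DL(p,q)$ acts on the vertex set $\subset T_p\times T_q$; the key first step is to show that $g$ is induced by a pair of tree automorphisms $(g_1,g_2)$ of $T_p$ and $T_q$ (when $p\ne q$), or by such a pair possibly composed with the flip $\sigma$ (when $p=q$).

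The main steps I would carry out in order: (1) Show that the two "projection foliations" of $\DL(p,q)$ — the fibres of $xy\mapsto x$ and of $xy\mapsto y$ — are canonically defined in graph-theoretic terms, so any automorphism either preserves each of them or swaps them. Concretely, for $p\ne q$ the local picture at a vertex (the way the $p+1$ and $q+1$ neighbours split) is asymmetric, forcing preservation; for $p=q$ the flip $\sigma$ realizes the swap, and after composing with $\sigma$ if necessary we may assume $g$ preserves both foliations. (2) A foliation-preserving $g$ then descends to well-defined maps $g_1\colon T_p\to T_p$ and $g_2\colon T_q\to T_q$; check these are graph automorphisms (adjacency in $\DL$ projects to adjacency in each factor, and conversely compatibility of the heights lets one lift). (3) Show $g_1$ fixes the end $\omega_p$ and $g_2$ fixes $\omega_q$: because edges of $\DL(p,q)$ join a height-$n$ vertex to a height-$(n+1)$ vertex in one factor and simultaneously a height-$(-n)$ vertex to a height-$(-n\mp1)$ vertex in the other, the sign of the height change is coupled across the two factors; an automorphism must respect this coupling, and one deduces that $g_i$ cannot invert the height direction, i.e. $g_i\in\Aff(T_i)$. (4) Finally, the constraint $\h_p(x)+\h_q(y)=0$ defining $\DL(p,q)$ forces $\h_p(g_1)+\h_q(g_2)=0$, so $(g_1,g_2)\in\mathcal{A}_{p,q}$; conversely every such pair clearly preserves the defining equation and the adjacency relation, giving the isomorphism $\Aut(\DL(p,q))\cong\mathcal{A}_{p,q}$ for $p\ne q$, and the description by $\langle\mathcal{A}_{p,p},\sigma\rangle$ for $p=q$.

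The step I expect to be the main obstacle is step (3) — proving that the induced tree automorphisms must fix the chosen ends rather than merely permute the ends arbitrarily. This is where the horocyclic-product structure is essential: a generic automorphism of a regular tree moves its ends, so one genuinely needs to exploit that the height functions of the two factors are tied together with opposite signs along every edge of $\DL(p,q)$, which rigidifies the situation. I would handle it by tracking a bi-infinite geodesic in $\DL(p,q)$ that projects to the geodesic ray towards $\omega_p$ in the first factor (on one half) and towards $\omega_q$ in the second (on the other half), and arguing that its image under $g$ must again have this shape, forcing $g_1(\omega_p)=\omega_p$ and $g_2(\omega_q)=\omega_q$. Since this lemma is quoted verbatim from~\cite{Bartholdi} and~\cite{Bertacchi}, in the paper itself I would simply cite those sources rather than reproduce the argument; the sketch above indicates why the statement holds and where the real content lies.
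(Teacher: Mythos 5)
The paper does not prove this lemma---it is stated with the citation to \cite{Bartholdi} and \cite{Bertacchi} built into the lemma header---and you conclude by saying you would do exactly the same, so your approach agrees with the paper's. Your sketch of the underlying argument is a reasonable overview of what those references do; one small point to tighten is that a vertex of $\DL(p,q)$ has degree $p+q$, its incident edges splitting into a set of $p$ (ascending in $T_p$, descending in $T_q$) and a set of $q$ (the reverse), so the local asymmetry you invoke for $p\neq q$ is between groups of sizes $p$ and $q$, not $p+1$ and $q+1$.
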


Let us prove that $\DL(p,q)$ satisfies the hypothesis of Theorem~\ref{t:motion}.

\begin{lem}
	The group $\Aut(\DL(p,q))$ has infinite motion, and the stabilizer $S_{o_po_q}$ has infinite geometric motion. 
\end{lem}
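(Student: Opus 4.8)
I would deduce the statement from Lemma~\ref{l:aut} together with one elementary fact about trees, which I would isolate first: \emph{if $T$ is a regular tree of degree $\geq 3$ and $g\in\Aut(T)\setminus\{\id\}$, then $g$ moves infinitely many vertices; more precisely, there is a component $B$ of $T\setminus\{v\}$ for some vertex $v$ with $g(B)\cap B=\varnothing$, and consequently $\sup_{x\in T}d_T(x,g(x))=\infty$.} To prove it: if $g$ fixes no vertex it moves every vertex and any branch works; otherwise pick $v$ with $g(v)=v$ and $w$ with $g(w)\neq w$, write the geodesic $v=z_0,z_1,\dots,z_m=w$, let $j$ be least with $g(z_j)\neq z_j$, and let $B$ be the component of $T\setminus\{z_{j-1}\}$ containing $z_j$. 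Since $g$ fixes $z_{j-1}$ it permutes these components, so $g(B)$ is the component containing $g(z_j)$, which is different from $B$ as $g(z_j)\neq z_j$; thus $g(B)\cap B=\varnothing$, $B$ is infinite, and a vertex of $B$ at distance $m'$ from $z_{j-1}$ is sent to distance $m'$ on the other side, hence moved by $2m'$.

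\textbf{Infinite motion.} By Lemma~\ref{l:aut} every $g\in\Aut(\DL(p,q))\setminus\{\id\}$ is either $(f,f')\in\mathcal{A}_{p,q}$ or, only when $p=q$, of the form $\sigma\circ(f,f')$. In the first case at least one of $f,f'$ is non-trivial, say $f$; by the tree fact $f$ moves infinitely many vertices of $T_p$, and since $\h_q$ is onto $\mathbb{Z}$, for each such $x$ there is $y\in T_q$ with $\h_q(y)=-\h_p(x)$, so $xy\in\DL(p,q)$ and $g(xy)=f(x)f'(y)\neq xy$; distinct $x$ give distinct moved vertices, so $m(g)=\infty$ (and the case $f'\neq\id$ is symmetric). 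In the second case $g(xy)=f'(y)f(x)$, and after identifying $T_p$ with $T_q$ so that $\sigma$ is defined, a fixed vertex forces $y=f(x)$ and $x=f'(f(x))$; the membership condition $\h_p(x)+\h_q(f(x))=0$ then reads $2\h_p(x)+\h_p(f)=0$, so the fixed set is empty unless $\h_p(f)$ is even, in which case it lies in the single level $\{\h_p(x)=-\h_p(f)/2\}$. As there are infinitely many vertices of $\DL(p,q)$ at any other height, again $m(g)=\infty$. Hence $m(\DL(p,q))=\infty$.

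\textbf{Infinite geometric motion of $S_{o_po_q}$.} I would exhibit a single non-trivial $f\in\Aff(T_p)$ fixing $o_p$ with $\h_p(f)=0$ and $\sup_{x}d_{T_p}(x,f(x))=\infty$. Let $\rho$ be the geodesic ray from $o_p$ to $\omega_p$; using $p\geq2$, the vertex $o_p$ has (at least) two neighbours $u_1,u_2$ not on $\rho$, whose branches $T_1,T_2$ in $T_p\setminus\{o_p\}$ are disjoint infinite rooted trees, canonically isomorphic via some $\tau\colon T_1\to T_2$ preserving the distance to $o_p$. Let $f$ act as $\tau$ on $T_1$, as $\tau^{-1}$ on $T_2$, and as the identity elsewhere. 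Then $f\in\Aut(T_p)$ fixes $\rho$ pointwise, hence fixes $\omega_p$ and $o_p$ with $\h_p(f)=0$, and $f\neq\id$; and for $x\in T_1$ at distance $m$ from $o_p$ the geodesic to $f(x)\in T_2$ runs through $o_p$, so $d_{T_p}(x,f(x))=2m$, which is unbounded. Now $(f,\id)\in\mathcal{A}_{p,q}\subset\Aut(\DL(p,q))$ fixes the vertex $o_po_q$, and by \eqref{distancedl} we have $d_{\DL(p,q)}(f(x)y,xy)\geq d_{T_p}(f(x),x)$, which is unbounded as $xy$ ranges over vertices with $x\in T_1$. Therefore $\gm(S_{o_po_q})=\infty$.

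\textbf{Main obstacle.} The substantive point is the tree fact, namely that a non-trivial tree automorphism moves \emph{infinitely many} vertices (not just one): this is what makes the $(f,f')$ part of the motion argument work, and the branch-swapping refinement is exactly what yields unbounded displacement for the geometric-motion part. The only other thing requiring care is the bookkeeping in the $p=q$ case, where one must keep track of the identification of $T_p$ with $T_q$ used to define $\sigma$ and of the resulting identity $\h_p(f)=\h_q(f)$; once that is set up, the remaining computations are immediate from the definition of $\DL(p,q)$, the properties of the Busemann functions, and \eqref{distancedl}.
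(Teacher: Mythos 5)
Your infinite-motion argument is correct, and in the $p=q$ case it is actually more careful than the paper's: you analyse the fixed set of $\sigma\circ(f,f')$ directly (showing it is either empty or lies in a single height level), whereas the paper bounds the displacement on a sequence $x_ny_n$ with $\h_p(x_n)=-\h_p(y_n)=n$. Both work. Your ``tree fact'' is also sound, though the phrase ``any branch works'' in the fixed-point-free case should read ``some branch works'' — for a hyperbolic translation one needs a branch \emph{off} the axis (which exists since the degree is $\geq 3$); branches along the axis are not mapped off themselves.

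The gap is in the geometric-motion part. Read in context, $\gm(S_{o_po_q})=\infty$ has to mean that \emph{every} non-identity $g\in S_{o_po_q}$ satisfies $\gm(g)=\infty$: this is what the abstract states, what the remark preceding Theorem~\ref{t:motion} identifies with injectivity of $\iota\colon S_x\to\QI(X)$, and what the proof of Theorem~\ref{t:motion} actually uses (it passes from $\Aut(X,\phi)\subset S_x$ and $\gm(S_x)=\infty$ to the conclusion that any non-identity $f\in\Aut(X,\phi)$ has unbounded displacement, which only follows under the ``for all'' reading — with a literal $\sup$ over a subgroup the inference fails). The $\sup$ in the displayed definition of $\gm(A)$ appears to be a slip for $\inf$, paralleling $m(A)$. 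Your proof exhibits a \emph{single} branch-swapping automorphism $(f,\id)\in S_{o_po_q}$ of unbounded displacement, which establishes only the $\sup$ version. You in fact have all the ingredients to do better: your tree fact already gives $\sup_x d(x,g(x))=\infty$ for \emph{every} non-trivial automorphism of a regular tree, so applying it to whichever coordinate of $(f,f')$ is non-trivial (which fixes the root, since $(f,f')\in S_{o_po_q}$) and combining with~\eqref{distancedl} yields $\gm((f,f'))=\infty$ for every non-identity $(f,f')\in\mathcal A_{p,q}\cap S_{o_po_q}$. But even after that repair, you would still need to treat, for $p=q$, the stabilizer elements of the form $\sigma\circ(f,f')$, which the paper handles by the explicit displacement computation along the sequence $x_ny_n$; your proof as written does not address these at all.
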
 
\begin{proof}
Let $a=(f,f')\in \mathcal A_{p,q}$. If $a\neq\id$, then at least one of $f$, $f'$ is non-trivial, say $f$. Therefore $f$ is a non-trivial automorphism of a regular tree, hence $m(f)=m(a)=\infty$. If moreover $a\in S_{o_po_q}$, then $f(o_p)=o_p$, and therefore $\gm(f)=\infty$ when considered as an automorphism of $T_p$ (it is elementary to check that stabilizers in regular tres have infinite geometric motion). Now~\eqref{distancedl} yields $\gm(a)=\infty$, proving the result when $p\neq q$ by Lemma~\ref{l:aut}.

If $p=q$, then every automorphism which is not in $\mathcal A_{p,q}$ can be written as $\sigma a$, where $a=(f,f')\in\mathcal{A}_{p,p}$ and $\sigma$ is the map $xy\mapsto yx$. Since $f(o_p)=f'(o_p)=o_p$, we have $\h(f)=\h(f')=0$. Let $x_ny_n$ be a sequence in $\DL(p,p)$ with $\h_p(x_n)=-\h_p(y_n)=n$. Then 
\begin{multline*}
d(x_ny_n,\sigma a(x_ny_n)) = d(x_ny_n,f'(y_n)f(x_n))\geq |\h_p(x_n)- \h_p(f'(y_n))|=|\h_p(x_n)- \h_p(y_n)-\h_p(f)|\geq 2n-\h_p(f),
\end{multline*}
so $\gm(a)=m(a)=\infty$.
\end{proof}

\subsection{Graphs with bounded cycle length}\label{ss:boundedcycle} A \emph{cycle} of length $n\in\N$ in a graph is a path $\sigma$ of length $n$ with $\sigma(0)=\sigma(n)$ and $\sigma(i)\neq\sigma(j)$ for $0\leq i<j<n$. A graph $X$ has \emph{bounded cycle length} if there is $L\in\N$ such that every cycle in $X$ has length $\leq L$. It is not difficult to prove that all graphs of bounded cycle length are hyperbolic in the sense of Gromov. There are in the literature several non-equivalent definitions of the \emph{free product} of graphs, see e.g.~\cite{CarterTornierWillis}; one can easily check, however, that the following result holds for any of the definitions: The free product of a finite family of graphs of bounded cycle length has bounded cycle length. In particular, the free product of a finite family of finite graphs has bounded cycle length.

\begin{lem}[Cf.~{\cite[Lem.~3.6]{Lehner}}]\label{l:disjointray}
Let $X$ be a connected locally finite graph with infinite motion,  let $x\in X$, and let $f\in S_x$. Then there is a ray $\gamma\colon\N\to X$ such that $\gamma(0)=f(\gamma(0))$ and $\im(\gamma)\cap\im(f\circ\gamma)=\{\gamma(0)\}$.
\end{lem}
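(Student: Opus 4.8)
The statement asserts that for a connected locally finite graph $X$ with infinite motion, a vertex $x$, and $f\in S_x$, one can find a ray $\gamma$ starting at the fixed vertex whose image meets its $f$-translate only at the start point. The plan is to build $\gamma$ one vertex at a time, greedily, maintaining the invariant that after $n$ steps the finite path $\gamma(0),\dots,\gamma(n)$ and its image under $f$ intersect only in $\gamma(0)$. The main point is that at each stage we have genuine freedom in the choice of the next vertex, and this freedom comes from infinite motion: if from the current endpoint $\gamma(n)$ \emph{every} neighbour were forced to collide with $\im(f\circ\gamma)$ (the already-built portion together with all admissible future choices), then $f$ would have to be nearly trivial near $\gamma(n)$, contradicting $m(X)=\infty$.

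Here is the order I would carry this out. First I would set $\gamma(0)=x$; since $f(x)=x$ this is the base case. For the inductive step, suppose $\gamma(0),\dots,\gamma(n)$ has been constructed with $\{\gamma(0),\dots,\gamma(n)\}\cap\{f(\gamma(0)),\dots,f(\gamma(n))\}=\{x\}$, and moreover — a strengthening of the invariant worth carrying — that $\gamma(n)\neq f(\gamma(n))$ once $n\geq 1$, i.e.\ we have walked away from the fixed set. I would then argue that $\gamma(n)$ has a neighbour $w$ with $w\notin\{f(\gamma(0)),\dots,f(\gamma(n))\}$ and $f(w)\notin\{\gamma(0),\dots,\gamma(n),w\}$, and set $\gamma(n+1)=w$. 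The delicate case is when $\gamma(n)$ is adjacent to $f(\gamma(n))$ (or otherwise dangerously close), so that a careless choice could produce $\gamma(n+1)=f(\gamma(n+1))$ or $\gamma(n+1)\in\im(f\circ\gamma)$; one handles this by a counting/pigeonhole argument on the neighbours of $\gamma(n)$ together with the neighbours of $f(\gamma(n))=f$ applied appropriately, and by invoking that if $f$ fixed too large a neighbourhood of some vertex it would have finite motion or be the identity in a way that can be iterated. In the worst case — $X$ being a tree, or $f$ having small displacement everywhere along the incipient ray — I would instead run a cleaner argument: since $m(X)=\infty$, $f$ moves infinitely many vertices, so there is a vertex $z$ with $f(z)\neq z$; take a geodesic from $x$ towards $z$, let $y$ be the first vertex on it with $f(y)\neq y$, and start the ray along the "$f$-half" of a suitable geodesic, then extend by always stepping strictly farther from both $x$ and $f(\gamma(\cdot))$, which is possible because local finiteness plus infinite motion prevents the forward cone from being entirely blocked.

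The key steps, then, are: (1) reduce to producing, from any vertex $v$ with $f(v)\ne v$ that is already "far" from $\im(f\circ\gamma|_{\{0,\dots,n\}})$, a next vertex preserving the disjointness invariant; (2) prove that such a vertex exists using a pigeonhole count among the neighbours of $v$, where the obstruction set has bounded size because $\gamma$ is a path (so only the tail of $\im(f\circ\gamma)$ can be near $v$) and because $f$ is a graph automorphism (distances are preserved, so $f(w)$ near $v$ forces $w$ near $f^{-1}(v)$, which is controlled); (3) handle the initial segment — getting off the fixed-point set of $f$ — using $m(X)=\infty$, exactly as in the cited \cite[Lem.~3.6]{Lehner}; and (4) take the union of the nested finite paths to obtain the ray $\gamma\colon\N\to X$, noting $\gamma(0)=x=f(\gamma(0))$ and $\im(\gamma)\cap\im(f\circ\gamma)=\{x\}$ since any intersection point would already appear at a finite stage.

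The step I expect to be the main obstacle is (2): showing that the greedy extension never gets stuck, i.e.\ that at every vertex $\gamma(n)$ there is a legal choice of $\gamma(n+1)$. The subtlety is that $f(\gamma(n))$ and $\gamma(n)$ may be neighbours (or coincide, at the first step), so the set of "bad" neighbours of $\gamma(n)$ — those lying in $\im(f\circ\gamma)$ or mapping into $\im(\gamma)$ — is not obviously a proper subset of the neighbour set. Resolving this will require either a careful two-sided counting argument leveraging that $f$ is a bijective isometry together with local finiteness, or else rerouting through the "walk strictly away from $x$" idea so that the image and its $f$-translate are automatically separated by the height/distance function; I would most likely present the argument in the spirit of Lehner's proof, adapting the bookkeeping to the fixed base point $x$.
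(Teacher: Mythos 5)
The paper does not spell out a proof here: it simply refers to the proof of Lehner's Lemma~3.6, so there is no internal argument to measure your plan against, and I will assess it on its own. The main gap occurs at the very first step: you fix $\gamma(0)=x$, but the lemma only asserts that $\gamma(0)$ is \emph{some} vertex fixed by $f$ (the conclusion reads $\gamma(0)=f(\gamma(0))$, not $\gamma(0)=x$); the role of the hypothesis $f\in S_x$ is merely to ensure $f$ has a fixed point. The stronger statement with $\gamma(0)=x$ is false. For a concrete counterexample, let $T$ be the infinite $3$-regular tree, pick a vertex $v$, attach to $v$ a pendant path $v=p_{10},p_9,\dots,p_1,p_0=x$, and let $f$ be a nontrivial automorphism of $T$ fixing $v$, extended by the identity on the path. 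The resulting graph is connected, locally finite, and has infinite motion (since $x$ and $v$ are the unique vertices of degree $1$ and $4$, every automorphism fixes the whole pendant path, and any nontrivial automorphism of $T$ fixing $v$ moves an infinite subtree). Yet every ray from $x$ must begin $x,p_1,\dots,p_{10}$, all fixed by $f$, so $p_1\in\im(\gamma)\cap\im(f\circ\gamma)$ and the required intersection condition fails. Choosing a suitable starting fixed vertex (here, $v$) is therefore an essential part of the argument, not a free base case, and your greedy scheme never addresses it.

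Your step (2) --- that the greedy extension never gets stuck --- is also left unresolved, and you say so yourself. It is not routine bookkeeping: at a vertex $\gamma(n)$ of small degree, all but one neighbour may already lie on the path built so far, and the one remaining neighbour may be fixed by $f$ or may collide with the already-built $f$-image. Local finiteness and $m(X)=\infty$ do not by themselves rule this out at an individual step; one must choose both the starting point and the direction of escape so that the ray enters a region of the graph that $f$ moves entirely off itself, which is exactly what Lehner's cited argument arranges. As written, your proposal outlines the right shape of such a proof but does not supply the two ideas that make it go through.
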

\begin{proof}
See the proof of~\cite[Lem.~3.6]{Lehner}.
\end{proof}

\begin{prop}
If $X$ has infinite motion and bounded cycle length, then every vertex stabilizer has infinite geometric motion.
\end{prop}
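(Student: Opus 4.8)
The goal is to show that if $X$ has infinite motion and bounded cycle length, then $\gm(S_x) = \infty$ for every $x \in X$; equivalently, for every vertex $x$ and every $f \in S_x \setminus \{\id\}$ there is a sequence $x_n$ with $d(x_n, f(x_n)) \to \infty$. The plan is to combine Lemma~\ref{l:disjointray} with the hyperbolic-type geometry forced by bounded cycle length. Fix $x$, let $f \in S_x$ be non-trivial, and apply Lemma~\ref{l:disjointray} to obtain a ray $\gamma\colon \N \to X$ with $\gamma(0) = f(\gamma(0))$ and $\im(\gamma) \cap \im(f\circ\gamma) = \{\gamma(0)\}$. Write $x_n = \gamma(n)$; then $f(x_n) = (f\circ\gamma)(n)$ lies on a ray that shares only the basepoint with $\gamma$. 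The claim is that $d(x_n, f(x_n)) \to \infty$.

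First I would set up the contradiction hypothesis: suppose along some subsequence $d(x_{n_k}, f(x_{n_k})) \le M$ for a fixed constant $M$. For each such $k$, pick a geodesic $\delta_k$ from $x_{n_k}$ to $f(x_{n_k})$ of length $\le M$. Consider the closed walk obtained by going out along $\gamma$ from $\gamma(0)$ to $x_{n_k}$, crossing $\delta_k$ to $f(x_{n_k})$, and returning along $f\circ\gamma$ back to $\gamma(0)$. Since the two rays meet only at $\gamma(0)$, this walk is ``almost injective'' away from a bounded region, so it contains a cycle of length roughly $2n_k$ — more precisely, one can extract from it a cycle whose length grows linearly in $n_k$. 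The key geometric input: because $\gamma$ and $f\circ\gamma$ share only $\gamma(0)$, any path from $x_{n_k}$ back to a point near $\gamma(0)$ that stays close to $f\circ\gamma$ cannot shortcut back onto $\gamma$, so the induced cycle genuinely has length $\gtrsim n_k$. Taking $n_k$ large enough to exceed the cycle bound $L$ gives the contradiction.

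To make the cycle-extraction rigorous I would argue as follows. Let $\alpha_k$ be the portion of $\gamma$ from $\gamma(0)$ to $x_{n_k}$ (length $n_k$), let $\beta_k$ be the portion of $f\circ\gamma$ from $f(x_{n_k})$ to $\gamma(0)$ (length $n_k$), and let $\delta_k$ be the chosen geodesic (length $\le M$). The concatenation $\alpha_k \delta_k \beta_k$ is a closed walk of length $\le 2n_k + M$. Since $\im(\alpha_k) \cap \im(\beta_k) = \{\gamma(0)\}$ and both $\alpha_k,\beta_k$ are themselves injective (being subpaths of rays; here one should note geodesic rays produced as in Lehner's lemma are injective, or pass to an injective reparametrization), the only repeated vertices in the walk come from $\delta_k$ meeting $\alpha_k$ or $\beta_k$. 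Choose $p$ to be the last vertex of $\alpha_k$ (in the order from $\gamma(0)$ to $x_{n_k}$) that lies on $\im(\delta_k)$, and $q$ the first vertex of $\beta_k$ lying on $\im(\delta_k)$. Then the sub-walk from $p$ along $\alpha_k$ to $x_{n_k}$, along $\delta_k$ to $f(x_{n_k})$, along $\beta_k$ to $q$, and back along $\delta_k$ to $p$ is a closed walk with no repeated interior vertices except possibly along the $\delta_k$-segments, and it contains a cycle; its length is at least the $\alpha_k$-distance from $p$ to $x_{n_k}$, which is at least $n_k - M$ since $p \in \im(\delta_k)$ forces $d(p, x_{n_k}) \le M$ — wait, that bounds it the wrong way. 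Instead: $p$ lies within distance $M$ of $x_{n_k}$ along $\delta_k$, hence the portion of $\alpha_k$ used has length $\ge $ (distance along $\gamma$ from $p$ to $x_{n_k}$), and since $\gamma$ is geodesic this equals $d(p,x_{n_k}) \le M$ — so this particular cycle may be short. The fix is to instead bound the length of $\delta_k$ from below, or better: observe that $\gamma(0)$ lies on the walk, $d(\gamma(0), x_{n_k}) = n_k$, and any cycle through appropriate vertices must have length $\ge 2(n_k - M)$ by a direct triangle-inequality estimate comparing the two ``sides'' $\alpha_k$ and $\beta_k$ — I would carry this out by contradiction with $L$ directly rather than via explicit cycle surgery.

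The step I expect to be the main obstacle is exactly this last one: turning the ``two disjoint long rays joined by a short geodesic'' picture into a genuine \emph{cycle} (a simple closed path, per the paper's definition) of length exceeding $L$. The subtlety is that $\delta_k$ could wind back and forth between the two rays, so one must carefully select the right sub-walk and verify it is a simple cycle. I would handle this by taking, among all cycles contained in the union $\im(\alpha_k) \cup \im(\beta_k) \cup \im(\delta_k)$, and using that $\gamma(0) \in \im(\alpha_k) \cap \im(\beta_k)$ together with a connectivity/first-last-intersection argument to produce one cycle containing a long arc of $\alpha_k$; the disjointness $\im(\alpha_k) \cap \im(\beta_k) = \{\gamma(0)\}$ is what prevents this arc from being shortcut. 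Once such a cycle of length $\to \infty$ is produced, it contradicts bounded cycle length, completing the proof. Alternatively — and this may be cleaner — I would mimic the structure of \cite[Lem.~3.6]{Lehner} more closely, since that reference already develops the ray machinery in the bounded-cycle-length (hyperbolic) setting.
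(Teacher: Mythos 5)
Your overall strategy matches the paper's: apply Lemma~\ref{l:disjointray} to get a ray $\gamma$ with $\im(\gamma)\cap\im(f\circ\gamma)=\{\gamma(0)\}$, set $\gamma'=f\circ\gamma$, join $\gamma(n)$ to $\gamma'(n)$ by a geodesic $\sigma_n$, and try to extract a long cycle to contradict bounded cycle length. But the cycle-extraction step --- which you yourself flag as ``the main obstacle'' --- is exactly where the proof lives, and you do not close it. Your first attempt goes the wrong way (you pick the arc of $\alpha_k$ from $p$ toward $x_{n_k}$, which is the short end; the useful arc is the one from $\gamma(0)$ to $p$, which has length at least $n_k-M$). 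You notice this, but the fallback you sketch is still not sound: your $p$ and $q$ are chosen to be extremal \emph{in the ordering along the rays}, and there is no reason why the segment of $\delta_k$ between $q$ and $p$ should avoid $\im\gamma$ or $\im\gamma'$ in its interior, so the resulting closed walk need not contain a single simple cycle of comparable length in any obvious way. Your final suggestion (take ``among all cycles contained in the union'' and invoke a connectivity argument) is a plan, not an argument.

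The missing idea is to index \emph{along $\sigma_n$}, not along the rays. Define $m_n$ to be the largest index with $\sigma_n(m_n)\in\im\gamma$ and $m'_n$ the smallest index with $\sigma_n(m'_n)\in\im\gamma'$. When $m_n<m'_n$, the subpath $\sigma_n|_{[m_n,m'_n]}$ meets neither ray in its interior by construction, and since the two rays meet only at $\gamma(0)$ the concatenation
\[
\bigl(\gamma(0),\ldots,\gamma(i)=\sigma_n(m_n)\bigr),\quad \bigl(\sigma_n(m_n),\ldots,\sigma_n(m'_n)\bigr),\quad \bigl(\gamma'(j)=\sigma_n(m'_n),\ldots,\gamma'(0)\bigr)
\]
is a genuine cycle. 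Because $\sigma_n$ is a geodesic of length $d(\gamma(n),\gamma'(n))$, we get $m_n,m'_n\le d(\gamma(n),\gamma'(n))$, hence $d(\gamma(i),\gamma(n))\le d(\gamma(n),\gamma'(n))$ and so $i\ge n-d(\gamma(n),\gamma'(n))$, and likewise for $j$; the cycle length is at least $2n-2d(\gamma(n),\gamma'(n))$, and bounded cycle length forces $d(\gamma(n),\gamma'(n))\to\infty$. (The degenerate cases $m_n\ge m'_n$ or $i=0$ or $j=0$ all force $\gamma(0)\in\im\sigma_n$ and hence $d(\gamma(n),\gamma'(n))\ge n$ directly.) Without this choice of indices, the step you identify as the obstacle remains open, so the proposal as written has a genuine gap.
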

\begin{proof}
Let $x\in X$ and let $f\in S_x$. By Lemma~\ref{l:disjointray}, there is a ray $\gamma$ such that, if we let $\gamma'=f(\gamma)$, then $\gamma(0)=\gamma'(0)$ and $\im(\gamma)\cap \im(\gamma')=\{\gamma(0)\}$. 
For $n\in\mathbb{Z}^+$, choose geodesic paths $\sigma_{n}$ from $\gamma(n)$ to $\gamma'(n)$. Let $m_n$ be the largest integer such that $\sigma_n(m_n)\in\im\gamma$, and let $m'_n$ be the least integer such that $\sigma_n(m'_n)\in\im\gamma'$; clearly $m_n,m'_n\leq d(\gamma(n),\gamma'(n))$.
The  triangle $Z_n$ with sides 
\[
(\gamma(0),\ldots, \gamma(i)=\sigma(m_n)),\quad  (\sigma(m_n), \sigma(m_n+1),\ldots,\sigma(m'_n)),\quad \text{and} \quad (\gamma'(j)=\sigma(m'_n),\gamma'(j-1),\ldots, \gamma'(0))
\]
determines a cycle of length  $\geq 2n-2d(\gamma(n),\gamma'(n))$. Now the assumption that $X$ has bounded cycle length yields $\lim d(\gamma(n),\gamma'(n))=d(\gamma(n),f(\gamma(n))=\infty$, and the result follows.
\end{proof}

\subsection*{Acknowledgements}  Part of this work was carried out during the tenure of a Canon Foundation in Europe Research Fellowship by B.L. H.N. is partly supported by JSPS KAKENHI Grant Number 17K14195. The authors are supported by the Program for the Promotion of International Research by Ritsumeikan University.

\bibliographystyle{ieeetr}
\bibliography{infmotion}

\end{document}